\documentclass[reqno]{amsart}
\usepackage{amsmath}
\usepackage{amssymb}
\usepackage{amsthm}
\usepackage{mathrsfs}
\usepackage{ulem}
\usepackage{url}
\usepackage{cancel}
\newtheorem{definition}{Definition}[section]
\newtheorem{theorem}[definition]{Theorem}
\newtheorem{lemma}[definition]{Lemma}

\newtheorem{cor}[definition]{Corollary}

\newtheorem{example}[definition]{Example}
\newtheorem{examples}[definition]{Examples}
\newtheorem{remark}[definition]{Remark}

\usepackage{color}

\newcommand{\M}{\mathrm{Mult}(H_k)}
\newcommand{\C}{\mathbb{C}}
\newcommand{\D}{\mathbb{D}}
\newcommand{\B}{\mathbb{B}}
\newcommand{\HH}{H^{\infty}(\D)}
\newcommand{\bb}{\hat{b}(\lambda)}
\DeclareRobustCommand{\erase}{\bgroup\markoverwith{\textcolor{red}{\rule[.5ex]{2pt}{0.4pt}}}\ULon}

\begin{document}

\title[Complex structure that admits CNP spaces of Hardy type]{Complex structure that admits complete Nevanlinna--Pick spaces of Hardy type}
\author{Kenta Kojin}
\address{
Graduate School of Mathematics, Nagoya University, 
Furocho, Chikusaku, Nagoya, 464-8602, Japan
}
\email{m20016y@math.nagoya-u.ac.jp}
\date{\today}

\begin{abstract}
    In this paper, we will characterize those sets, over which every irreducible complete Nevanlinna--Pick space enjoys that its multiplier and supremum norms coincide. Moreover, we will prove that, if there exists an irreducible complete Nevanlinna--Pick space of holomorphic functions on a reduced complex space $X$ whose multiplier algebra is isometrically equal to the algebra of bounded holomorphic functions (we will say that such a space is {\bf of Hardy type} in this paper), then $X$ must be biholomorphic to the unit disk minus a zero analytic capacity set. This means that the Hardy space is characterized as a unique irreducible complete Nevanlinna--Pick space of Hardy type.
\end{abstract}

     \maketitle

    
\section{Introduction}
    
    In 1967, Sarason \cite{Sar} gave an operator theoretic proof of the celebrated interpolation theorem due to Pick \cite{Pic} and Nevanlinna \cite{Nev1919}. 
    He identified the bounded holomorphic functions $\HH$ on the open unit disk $\mathbb{D}$ with the multiplier algebra of the Hardy space $H^2$ on $\mathbb{D}$, a reproducing kernel Hilbert space (RKHS for short) of holomorphic functions on $\D$, and rephrased the Nevanlinna--Pick interpolation theorem in terms of reproducing kernel Hilbert spaces.
    
    In \cite{Agl88}, Agler searched for another RKHS on which a Nevanlinna--Pick type interpolation theorem holds. The existing works due to Agler \cite{Agl88}, McCullough \cite{McC1992, McC1994} and Quiggin \cite{Qui} altogether established a characterization theorem for RKHS's on which the matrix-valued Pick interpolation theorem holds. Such a space is called a complete Nevanlinna--Pick space. The Hardy space $H^2$ and the Dirichlet space on $\D$ are prototypical examples (see \cite{AMp}). 
    Then, Agler and McCarthy \cite{AM2000} established the following ``universality" of the Drury--Arveson space $H_d^2$ (which is understood as a multivariable Hardy space on the $d$-dimensional open unit ball $\B_d$): Every irreducible complete Nevanlinna--Pick space can be embedded into $H_d^2$ with some $1\le d\le\infty$. Then, irreducible complete Nevanlinna--Pick spaces have been studied by many hands, because those spaces share many properties with $H^2$ by replacing $\HH$ with their multiplier algebras. For example, we can generalize Beurling's theorem and the inner-outer factorization for those spaces (see \cite{AMp, JM}). See \cite{Hartz2022} as a good survey including recent developments, and \cite{AMp} as a standard reference.
    
    It is well known that the multiplier algebra of $H^2$ is isometrically equal to $\HH$. Moreover, for $1\le d<\infty$ and $d<v$, the multiplier algebras of the Hardy space $H^2(\B_d)$ on $\mathbb{B}_d$ and the weighted Bergman space $L_a^2((1-|z|^2)^{v-d-1}dV)$ on $\mathbb{B}_d$ are isometrically equal to $H^{\infty}(\B_d)$, but it is a folklore that those spaces are not complete Nevanlinna--Pick spaces. We would also like to point out that the multiplier norm of an (even irreducible) complete Nevanlinna--Pick space does not have to be comparable with the supremum norm. Thus, the following subtle questions naturally arise: 
    \begin{itemize}
    \item When does the multiplier norm of an irreducible complete Nevanlinna--Pick space coincide with the supremum norm? 
    \item When is the multiplier algebra of an irreducible complete Nevanlinna--Pick space of holomorphic functions on a ``complex space" $X$ isometrically equal to all the bounded holomorphic functions on $X$? 
    \end{itemize}
    The purpose of this paper is to give complete answers to the questions.
    
    \medskip
    We first study irreducible complete Nevanlinna--Pick spaces $H_k$ with the following norm property: $\|\phi\|_{\M}=\|\phi\|_{\infty}$ for any $\phi\in\M$. Here and in what follows, $H_k$ is an RKHS with reproducing kernel $k$, and $\M$ denotes its multiplier algebra. A consequence of our investigation is that one can choose $d=1$ in the above-mentioned universality of $H_d^2$ (see Theorem \ref{theorem1}). This is indeed an answer to the first question. 
    
    \medskip
    We are then interested in irreducible complete Nevanlinna--Pick spaces $H_k$ consisting of holomorphic functions on a reduced complex space $X$ such that $\M$ is isometrically equal to $H^{\infty}(X)$. Here, a reduced complex space is a complex manifold allowing singularities (see Definition \ref{def:complex space}).  We say that such an RKHS is of Hardy type because a prototypical example is the Hardy space $H^2$ as mentioned before.
    
    By Riemann removable singularities theorem \cite[Theorem 10.20]{Rud}, we can construct an RKHS of Hardy type on $\D\setminus\{a\}$ for any $a\in\D$. 
    This observation suggests that the characterization problem for RKHS of Hardy type may be related to the removable singularities problem for bounded holomorphic functions on a subdomain of $\mathbb{D}$.
    Riemann's theorem was strengthened by means of the concept of analytic capacity (see e.g., \cite{Gar, Tol}).
    Indeed, we will also prove the following satisfactory result involving the analytic capacity (see Theorem \ref{theorem2}):
    \begin{theorem}
    Let $X$ be a reduced complex space with arbitrary $x_0\in X$. Then, there exists an RKHS $H_k$ of Hardy type on $X$ normalized at $x_0$ if and only if there exist a relatively closed set $E\subset\D$ and a 
    biholomorphic map $j$ from X onto $\D\setminus E$ such that 
    \begin{itemize}
    \item[(1)] $\D\setminus E$ is connected, 
    \item[(2)] the analytic capacity of $E$ is $0$,
    \item[(3)] $j$ sends $x_0$ to the origin $0 \in \mathbb{D}$.
    \end{itemize}
    Moreover, in this case, such an RKHS $H_k$ is unique and its reproducing kernel is given by
    \begin{equation*}
        k(x,y)=\frac{1}{1-j(x)\overline{j(y)}}\;\;\;\;(x,y\in X).
    \end{equation*}
    Hence, 
    \begin{equation*}
        U: H^2\rightarrow H_k, \;\;\;\; f\mapsto f\circ j
    \end{equation*}
    is a unitary operator. 
    \end{theorem}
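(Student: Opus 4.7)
The plan is to prove the two directions separately, drawing on Theorem~\ref{theorem1} for the necessity side. For sufficiency, given a biholomorphism $j\colon X\to\D\setminus E$ satisfying (1)--(3), I would define
\[
k(x,y) := \frac{1}{1-j(x)\overline{j(y)}},
\]
and check that $U\colon H^2\to H_k$, $f\mapsto f\circ j$, is unitary, because it pulls back the Szeg\H{o} kernel to $k$. Unitarity transfers the irreducible complete Nevanlinna--Pick structure from $H^2$ to $H_k$, and the normalization $k(\cdot,x_0)\equiv 1$ follows from $j(x_0)=0$. The induced isometric isomorphism $\HH=\mathrm{Mult}(H^2)\to\M$, $\phi\mapsto\phi\circ j$, then reduces the identification $\M=H^\infty(X)$ isometrically to showing that the restriction map $\HH\to H^\infty(\D\setminus E)$ is an isometric bijection, which is precisely the Painlev\'e--Ahlfors characterization of $E$ having zero analytic capacity.

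For necessity, suppose $H_k$ is of Hardy type on $X$. The hypothesis $\M=H^\infty(X)$ isometrically makes the multiplier and supremum norms coincide, so Theorem~\ref{theorem1} applies and reduces the Agler--McCarthy universal embedding dimension to $d=1$: one obtains a map $j\colon X\to\D$ with $k(x,y)=(1-j(x)\overline{j(y)})^{-1}$, where $j(x_0)=0$ by normalization and $j$ is holomorphic because the kernel sections are. I would then verify (i) injectivity of $j$, via the identity
\[
|1-j(x)\overline{j(y)}|^2-(1-|j(x)|^2)(1-|j(y)|^2)=|j(x)-j(y)|^2,
\]
which translates the irreducibility condition (linear independence of $k(\cdot,x)$ and $k(\cdot,y)$ for $x\ne y$) into $j(x)\ne j(y)$; (ii) the structural claim that $X$ is in fact a smooth Riemann surface and $j$ is open, so $j$ is a biholomorphism onto its image $\D\setminus E$; (iii) with $E:=\D\setminus j(X)$, the isometric chain $\HH\cong\M\cong H^\infty(X)\cong H^\infty(\D\setminus E)$ forces the restriction $\HH\to H^\infty(\D\setminus E)$ to be an isometric bijection, so $E$ has zero analytic capacity; and (iv) connectedness of $\D\setminus E$, because any disconnection would yield a nonconstant $\{0,1\}$-valued bounded holomorphic function on $\D\setminus E$ extending by (iii) to an impossible such function on the connected disk. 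Uniqueness of $H_k$ then follows because $k$ is determined by $j$, which is pinned down by $j(x_0)=0$ up to rotations that leave the kernel invariant.

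The main obstacle will be step (ii): showing that the reduced complex space $X$ must actually be a smooth one-dimensional complex manifold with $j$ an open map. Higher-dimensional regular points of $X$ are excluded quickly, since an injective holomorphic function from an open subset of $\C^d$ with $d\ge 2$ into $\C$ contradicts the fiber-dimension structure of holomorphic maps. Singular points are subtler: nodes, cusps, and more general reduced curve singularities must be ruled out by local analysis showing that no holomorphic function on such a germ can be locally injective, owing to multi-branched crossings or higher-order critical points whose order reflects the local multiplicity. Once smoothness is established, the open mapping theorem for Riemann surfaces yields openness of $j$, and the remaining pieces (analytic capacity zero of $E$, connectedness of $\D\setminus E$, and the uniqueness/kernel formula) follow in a much more direct manner from the isometric identifications above.
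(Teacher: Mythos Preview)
Your proposal is correct and tracks the paper's proof closely: the sufficiency argument, the appeal to Theorem~\ref{theorem1}, the isometric chain yielding $\gamma(E)=0$, and the connectedness argument all match (note that Theorem~\ref{theorem1} already delivers $j$ injective with $j\in\M\subset H^\infty(X)$, so your step~(i) and the kernel-section argument for holomorphicity are redundant, and uniqueness is dispatched in the paper simply by citing \cite[Corollary~3.2]{Har2017}). The one genuine divergence is your step~(ii). You plan to establish smoothness of $X$ by hand, ruling out higher-dimensional charts and then analysing curve singularities germ by germ. The paper bypasses this entirely: injectivity of $j$ gives $\{\mathfrak z\in X: j(\mathfrak z)=j(x)\}=\{x\}$, so $\dim_x X\le 1$ directly from the definition of dimension, while connectedness of $X$ (Lemma~\ref{connected}) forbids isolated points, giving $\dim_x X\ge 1$. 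Thus $X$ is pure $1$-dimensional, and a single theorem from Grauert--Remmert (on injective holomorphic maps from a pure-dimensional reduced complex space into a smooth target of the same dimension) then yields at once that $j$ is open and biholomorphic onto its image---smoothness of $X$ drops out as a consequence rather than an input. Your route is more elementary but substantially longer; the paper's buys brevity at the cost of invoking a nontrivial structural result from complex-space theory.
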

    
    Related to this result, it has been known, see \cite[Proposition 8.83]{AMp} and \cite[Corollary 3.3]{Har2017}, that there is no RKHS of Hardy type on both the polydisk $\D^d$ and $\B_d$ with $d\ge2$. Therefore, the above theorem may be regarded as a vast generalization of this observation (see also Corollary \ref{corollary1}).
    
    We will also prove the following theorem (see Theorem \ref{prop1}):
    \begin{theorem}
    Let $H_k$ be an RKHS of Hardy type on a reduced complex space $X$.
    If $X$ is complete with respect to the Carath\'{e}odory distance, then $X$ must be biholomorphic to $\D$.
    \end{theorem}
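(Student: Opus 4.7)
The plan is to reduce to the concrete model supplied by the previous theorem and then show that Carath\'{e}odory completeness forces the exceptional set there to be empty. By that theorem I obtain a biholomorphism $j:X\to\D\setminus E$, where $E\subset\D$ is relatively closed, $\D\setminus E$ is connected, and the analytic capacity $\gamma(E)$ vanishes. Since any biholomorphism is automatically an isometry for the Carath\'{e}odory (pseudo)distance, completeness of $X$ in $c_X$ is equivalent to completeness of $\D\setminus E$ in $c_{\D\setminus E}$. The problem thus becomes: if $\D\setminus E$ with $\gamma(E)=0$ is Carath\'{e}odory-complete, then $E=\emptyset$.

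The decisive analytic input is the defining property of zero analytic capacity: every bounded holomorphic function on $\D\setminus E$ extends uniquely to a bounded holomorphic function on $\D$. Applied to any holomorphic map $f:\D\setminus E\to\D$, this produces a holomorphic $\tilde f:\D\to\C$ with $|\tilde f|\le 1$ by continuity, and the open-mapping theorem then forces $\tilde f(\D)\subset\D$. Consequently every $\D$-valued holomorphic map on $\D\setminus E$ is the restriction of one on $\D$, and passing to the sup in the definition of the Carath\'{e}odory distance gives
\begin{equation*}
c_{\D\setminus E}(x,y)=c_{\D}(x,y)=\rho_{\D}(x,y)\qquad(x,y\in\D\setminus E),
\end{equation*}
where $\rho_{\D}$ denotes the hyperbolic distance on $\D$ and the second equality is Schwarz--Pick.

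To conclude, I argue by contradiction. Suppose $E\neq\emptyset$ and fix $p\in E$. Since every open disk has strictly positive analytic capacity, $E$ has empty interior, so there is a Euclidean sequence $x_n\in\D\setminus E$ with $x_n\to p$. Because $p$ lies in the open disk $\D$, the sequence eventually stays in a compact subset of $\D$ on which $\rho_{\D}$ and the Euclidean distance are comparable; thus $(x_n)$ is $\rho_{\D}$-Cauchy and, by the identification above, $c_{\D\setminus E}$-Cauchy. Completeness yields $x_\infty\in\D\setminus E$ with $x_n\to x_\infty$ in $c_{\D\setminus E}$, and the same local comparability of metrics converts this to Euclidean convergence, so $x_\infty=p\in E$, contradicting $x_\infty\notin E$.

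The heart of the argument is the identity $c_{\D\setminus E}=\rho_{\D}|_{\D\setminus E}$; this is exactly what the zero-analytic-capacity hypothesis supplies, via the extension principle for bounded holomorphic functions together with a brief use of the open-mapping theorem. Once that is in hand, the remainder is a standard comparison of the three metrics (Carath\'{e}odory, hyperbolic, Euclidean) on relatively compact subsets of $\D$, combined with the well-known completeness of $\rho_{\D}$ on $\D$ itself. The one point that deserves genuine care is verifying that $c_{\D\setminus E}$-convergence to a point of $\D\setminus E$ really does force convergence in the ambient manifold topology; this follows because $\rho_{\D}$ bounds the Euclidean distance below on any set compactly contained in $\D$, but it is the place where a careless argument could slip.
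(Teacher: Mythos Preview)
Your proof is correct and shares the paper's overall architecture: invoke Theorem~\ref{theorem2} to reduce to showing $E=\emptyset$, identify the Carath\'{e}odory distance with the hyperbolic distance on $\D$, and run a Cauchy-sequence argument toward a would-be point of $E$. The one substantive difference lies in how the key metric identity is obtained. The paper uses Lemma~\ref{lemma4-1}, which goes through the RKHS machinery (the two-point Nevanlinna--Pick property and the extremal characterization of $d_k$), to conclude $d_X=d_k$, and then reads off $d_k(x,y)=d_\D(j(x),j(y))$ from the explicit kernel formula. You instead transport the problem to $\D\setminus E$ via biholomorphic invariance of $c$, and then use the extension property afforded by $\gamma(E)=0$ together with Schwarz--Pick to prove $c_{\D\setminus E}=c_\D|_{\D\setminus E}$ directly. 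Your route is a bit more self-contained in that it bypasses Lemma~\ref{lemma4-1} entirely; the paper's route keeps the argument tied to the reproducing-kernel framework developed in the preceding sections. Both lead to the same Cauchy-sequence endgame, and your handling of the metric comparisons at the end is fine.
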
 

    
    These two theorems are regarded as perfect answers to the second question given above. 
    
    \medskip
    Our strategy to prove the above-mentioned results is the use of the following naive idea: The geometry of $X$ can be captured from that of an RKHS on $X$.
    
    In our discussion, we will crucially use a solution of the extremal problem for the following distance:
    \begin{equation*}
        d_k(x,y):=\sqrt{1-\frac{|k(x,y)|^2}{k(x,x)k(y,y)}}\;\;\;\;(x,y\in X).
    \end{equation*}
    See e.g., \cite[Proposition 3.1]{Har2017}. When an irreducible RKHS $H_k$ on $X$ has the two-point Nevanlinna--Pick property (see Definition \ref{def:two-point Pick}), the solution we will use is:   
    \begin{equation*} 
        \sup\{\mathrm{Re}\phi(x)\;|\;\|\phi\|_{\M}\le 1 \;\mathrm{and}\; \phi(x_0)=0\}=d_k(x,x_0)
    \end{equation*}
    must hold for any $x, x_0\in X$, and moreover, for any $x\in X$ with $x\ne x_0$, there is a unique multiplier $\phi_{x}$ that achieves the supremum of the left-hand side. This fact plays a crucial role to answer the first question and to prove the first theorem above. 
    
    In addition, if $H_k$ consists of holomorphic functions on a reduced complex space $X$ and $\M=H^{\infty}(X)$ isometrically, then the left-hand side of the above identity coincides with the M$\ddot{\mathrm{o}}$bius pseudo-distance
    \begin{equation*}
    d_X(x,x_0):=\sup_{f\in H(X,\D)}d_{\D}(f(x), f(x_0))
    \end{equation*}
    (see Lemma \ref{lemma4-1}).
    Here $\displaystyle d_{\D}(z,w):=\left|\frac{z-w}{1-\overline{w}z}\right|$ and $H(X,\D)$ is the set of holomorphic functions from $X$ into $\D$. 
    Namely, the complex geometric structure of $X$ is certainly connected to the geometry of $H_k$.
    Actually, this observation motivated us to solve the second question mentioned above.


\section{Complete Nevanlinna--Pick spaces}
 Let $H_k$ be an RKHS on a set X with reproducing kernel $k:X\times X\rightarrow\C$. We will always assume that $X$ is neither the empty set nor a singleton (in the latter case, $H_k=\C$), and  $H_k$ is irreducible in the following sense: For any two points $x,y\in X$, the reproducing kernel $k$ satisfies $k(x,y)\ne 0$, and if $x\ne y$, then $k(\cdot, x)$ and $k(\cdot, y)$ are linearly independent in $H_k$. A function $\phi:X\rightarrow\C$ is called a multiplier of $H_k$ if $\phi f\in H_k$ for all $f\in H_k$. In this case, the multiplication operator $M_{\phi}:H_k\rightarrow H_k$ defined by $M_{\phi}f:=\phi f$ is bounded by the closed graph theorem, and the multiplier norm $\|\phi\|_{\M}$ is defined to be the operator norm of $M_{\phi}$. The multiplier algebra of $H_k$, denoted by $\M$, is the algebra consisting of multipliers of $H_k$. It is naturally identified with a subalgebra of $B(H_k)$. 

The following normalization assumption is useful when one studies multiplier algebras: An RKHS $H_k$ is normalized at $x_0\in X$ if $k(x, x_0)=1$ for all $x\in X$. 
Any non-vanishing kernel can be normalized by considering
 \begin{equation}\label{equation:normalization}
    k_{x_0}(x,y):=\frac{k(x,y)k(x_0,x_0)}{k(x,x_0)k(x_0,y)}
 \end{equation}
 for any fixed $x_0\in X$. This normalization does not change the structure of the multiplier algebra (see \cite[Section 2.6]{AMp}). Therefore, we will assume that every irreducible RKHS that we consider below is normalized at some point.

 In this paper, we are interested in complete Nevanlinna--Pick spaces that enjoy $\|\phi\|_{\M}=\|\phi\|_{\infty}$ for all $\phi\in\M$, where $\|\phi\|_{\infty}:=\sup\{\phi(x)\;|\; x\in X\}$. For instance, if $H_k$ 
 is the Hardy space on the open unit disk $\D$, then all multipliers satisfy this norm condition. Moreover, we can prove that Mult$(H^2)$$=\HH$ isometrically \cite[Theorem 3.24]{AMp}.

 We recall the definition of complete Nevanlinna--Pick spaces. One of the generalizations of the Hardy space $H^2$ is the Drury--Arveson space $H^2_d$, whose reproducing kernel is $\displaystyle k_d(z,w)=\frac{1}{1-\langle z,w\rangle_{\C^d}}$ $(z,w\in \B_d)$. Here, $1\le d\le \infty$ and $\B_d:=\{z\in\C^d\;|\;\|z\|_{\C^d}<1\}$. 

 \begin{definition}\label{def:CNP}
An RKHS $H_k$ on a set $X$ is said to be a {\bf complete Nevanlinna--Pick space} if whenever $m, n\in \mathbb{N}$, $x_1\ldots, x_n$ are points in $X$ and $W_1,\ldots, W_n$ are $m\times m$ matrices such that
\begin{equation*}
    [(I-W_iW_j^*)k(x_i, x_j)]_{i,j=1}^n
\end{equation*}
is positive semidefinite, then there exists a $\Phi\in\mathbb{M}_m(\C)\otimes\M$ with $\Phi(x_i)=W_i$ for $1\le i\le n$ and $\|\Phi\|_{\mathbb{M}_m(\C)\otimes\M}\le 1$.
 \end{definition}

Agler and McCarthy \cite{AM2000} established the universality of the Drury--Arveson space. 

\begin{theorem}[Agler--McCarthy $\mbox{\cite{AM2000}}$]\label{universality}
Let $H_k$ be an irreducible complete Nevanlinna--Pick space on a set $X$ normalized at $x_0\in X$. Then, there exist a number $1\le d\le \infty$ and an injection $b:X\rightarrow\B_d$ with $b(x_0)=0$ such that
\begin{equation*}
    k(x,y)=\frac{1}{1-\langle b(x), b(y)\rangle_{\C^d}}\;\;\;\;(x,y\in X).
\end{equation*}
\end{theorem}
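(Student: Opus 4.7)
The plan is to combine the McCullough--Quiggin characterization of complete Nevanlinna--Pick kernels with the Moore--Aronszajn / Kolmogorov factorization of positive semidefinite kernels.

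First, I would invoke the McCullough--Quiggin theorem, which says that an irreducible reproducing kernel $k$ with $k$ nowhere vanishing is a complete Nevanlinna--Pick kernel if and only if
\begin{equation*}
F(x,y):=1-\frac{1}{k(x,y)}
\end{equation*}
is a positive semidefinite kernel on $X\times X$. Since $H_k$ is assumed to be an irreducible CNP space, $k$ is nowhere zero, so $F$ is well defined; and by the normalization assumption $k(x,x_0)=1$ for all $x\in X$, which in particular gives $k(x_0,x_0)=1$ and therefore $F(x,x_0)=0$ for every $x\in X$.

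Next, by Moore--Aronszajn (or equivalently Kolmogorov's decomposition of a PSD kernel), there exist a Hilbert space $\mathcal{H}$ and a map $b:X\to\mathcal{H}$ with
\begin{equation*}
F(x,y)=\langle b(x),b(y)\rangle_{\mathcal{H}}\qquad (x,y\in X),
\end{equation*}
and one may take $\mathcal{H}$ to be the closed linear span of $\{b(x):x\in X\}$. Setting $d:=\dim\mathcal{H}\in\{1,2,\dots,\infty\}$ and identifying $\mathcal{H}\cong\mathbb{C}^d$ (or $\ell^2$ if $d=\infty$) gives a map $b:X\to\mathbb{C}^d$ with
\begin{equation*}
k(x,y)=\frac{1}{1-\langle b(x),b(y)\rangle_{\mathbb{C}^d}}.
\end{equation*}
The identity $F(x,x_0)=0$ for all $x$ forces $b(x_0)=0$ (take $x=x_0$, so $\|b(x_0)\|^2=F(x_0,x_0)=0$). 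Moreover $\|b(x)\|^2=F(x,x)=1-1/k(x,x)<1$ since $k(x,x)>0$ is finite, so $b$ indeed lands in $\mathbb{B}_d$.

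Finally, I would verify injectivity of $b$ using the irreducibility hypothesis. If $b(x)=b(y)$, then for every $z\in X$,
\begin{equation*}
1-\frac{1}{k(z,x)}=F(z,x)=\langle b(z),b(x)\rangle=\langle b(z),b(y)\rangle=F(z,y)=1-\frac{1}{k(z,y)},
\end{equation*}
so $k(\cdot,x)=k(\cdot,y)$ as elements of $H_k$. Irreducibility then gives $x=y$.

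The only nontrivial ingredient is the McCullough--Quiggin characterization of CNP kernels in terms of PSD-ness of $1-1/k$; once that is in hand, the rest is a short Kolmogorov factorization followed by bookkeeping about normalization, norm bounds, and injectivity. I expect no serious obstacle, since the excerpt already cites this characterization as part of the established background (Agler, McCullough, Quiggin) leading up to Agler--McCarthy's universality theorem.
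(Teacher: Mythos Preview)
The paper does not supply its own proof of this theorem; it is quoted as a known result of Agler and McCarthy \cite{AM2000} and used as a black box. Your argument is correct and is precisely the standard route to this statement: the McCullough--Quiggin characterization gives positive semidefiniteness of $1-1/k$ for a normalized irreducible CNP kernel, Kolmogorov factorization then produces the map $b$ into a Hilbert space, and the verifications that $b(x_0)=0$, that $\|b(x)\|<1$, and that $b$ is injective (via the linear-independence clause in irreducibility) are exactly as you wrote. One small remark: to conclude $d\ge 1$ you implicitly use that $X$ is not a singleton (the paper assumes this globally), so that some $b(x)\neq 0$ and the span is nontrivial.
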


\begin{remark}\label{remarkHartz}
\upshape In this case, Hartz \cite[Theorem 2.1]{Har2015} proved that the composition operator $C_b:H_d^2\ominus I(A)\rightarrow H_k$ defined by $C_bf:=(f|_A)\circ b$ is a unitary operator, where $A:=b(X)$ and $I(A):=\{f\in H_d^2\;|\;f(z)=0\;(z\in A)\}$.
\end{remark}



\section{Uniqueness of the Hardy space $H^2$}

In this section, we will give a characterization of normalized irreducible complete Nevanlinna--Pick spaces such that $\|\phi\|_{\M}=\|\phi\|_{\infty}$ holds for all $\phi\in\M$. 

\begin{definition}
An RKHS $H_k$ of functions on a set $X$ is said to be of {\bf weak Hardy type} if $H_k$ is a normalized irreducible complete Nevanlinna--Pick space such that every $\phi\in\M$ satisfies 
\begin{equation*}
\|\phi\|_{\M}=\|\phi\|_{\infty}.
\end{equation*}
\end{definition}

\begin{remark}
\upshape If the diagonal $k(x,x)$ of a reproducing kernel is bounded on $X$, then \cite[Propositions 4.3 and 4.7]{Ser} imply that $H_k$ never becomes of weak Hardy type. In particular, if $X$ is a compact set, then there is no RKHS of weak Hardy type with the continuous diagonal $k(x,x)$. 
\end{remark}

\begin{definition}
Let $A$ be a subset of the open unit disk $\D$. The $A$ is said to be a {\bf set of uniqueness for $H^2$} if the only element of $H^2$ that vanishes on $A$ is the zero function. The $A$ is said to be {\bf dominating} if
\begin{equation*}
\sup\{|\Phi(z)|\;|\;z\in A\}=\sup\{|\Phi(z)|\;|\;z\in \D\}
\end{equation*}
holds for all $\Phi\in\HH$; see \cite[Definition 4.10]{RS}.
\end{definition}

\begin{remark}\label{remark:uniqueness}
\upshape  (1) Since the zeros of a nonzero function in $H^2$ satisfy the Blaschke condition \cite[Theorem 15.23]{Rud}, $A\subset\D$ is a set of uniqueness for $H^2$ if and only if
\begin{equation*}
\sum_{a\in A}(1-|a|)=\infty.
\end{equation*}
(2) We can see that, if $A\subset\D$ is dominating, then it is a set of uniqueness for $H^2$. In fact, if $\displaystyle\sum_{a\in A} (1-|a|)<\infty$, then
\begin{equation*}
\Theta(z):=\prod_{a\in A}\frac{a-z}{1-\overline{a}z}\frac{|a|}{a}\;\;\;\;(z\in\D)
\end{equation*}
converges and defines an inner function by \cite[Theorem 15.21]{Rud}. However, $\Theta$ has no zeros except the points $a\in A$. This contradicts that $A$ is dominating.

\end{remark}

We will use the following lemma to prove the main theorem of this section:


\begin{lemma}\label{lemma1}
Let $X$ be a set and $x_0\in X$. Let $j$ be an injective function from $X$ into $\D$ such that $j(x_0)=0$. Define a normalized irreducible complete Nevanlinna--Pick kernel $k$ on $X\times X$ by
\begin{equation*}
k(x,y)=\frac{1}{1-j(x)\overline{j(y)}}.
\end{equation*}
Then, for any $\phi\in\M$, there exists a $\Phi\in H^{\infty}(\D)$ such that $\phi=\Phi\circ j$ and $\|\phi\|_{\M}=\|\Phi\|_{\infty}$.
\end{lemma}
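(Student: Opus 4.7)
The plan is to translate the Pick-matrix characterization of contractive multipliers of $H_k$ into the classical Nevanlinna--Pick condition on $\D$ via the map $j$, apply the classical Nevanlinna--Pick interpolation theorem at each finite subset of $X$, and extract a coherent limit by a normal families argument.

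Normalize so that $\|\phi\|_{\M} \leq 1$. By the standard kernel characterization of contractive multipliers of an RKHS (the adjoint $M_\phi^*$ acts as multiplication by $\overline{\phi(x)}$ on each $k(\cdot, x)$), this is equivalent to requiring that
\[
\left[(1 - \phi(x_i)\overline{\phi(x_j)}) k(x_i, x_j)\right]_{i,j=1}^n \geq 0
\]
for every finite set $\{x_1,\dots,x_n\} \subset X$. Plugging in $k(x,y) = 1/(1 - j(x)\overline{j(y)})$ and using that $j$ is injective, this matrix is precisely the classical Pick matrix at the distinct points $j(x_1),\dots,j(x_n) \in \D$ with target values $\phi(x_1),\dots,\phi(x_n)$. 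Hence, by the classical Nevanlinna--Pick theorem on $\D$, for each finite $F \subset X$ there exists $\Phi_F \in H^{\infty}(\D)$ with $\|\Phi_F\|_{\infty} \leq 1$ and $\Phi_F(j(x)) = \phi(x)$ for all $x \in F$.

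To produce a single $\Phi$ valid on all of $X$, I would use that the closed unit ball of $H^{\infty}(\D)$ is compact in the topology of uniform convergence on compact subsets of $\D$ (Montel's theorem for bounded analytic functions). The net $(\Phi_F)_F$ indexed by finite subsets of $X$ (directed by inclusion) then admits a convergent subnet $\Phi_{F_\alpha} \to \Phi \in H^{\infty}(\D)$ with $\|\Phi\|_{\infty} \leq 1$. For any $x \in X$, since $\{x\} \subset F_\alpha$ eventually, pointwise convergence at $j(x)$ yields $\Phi(j(x)) = \phi(x)$. The matching inequality $\|\phi\|_{\M} \leq \|\Phi\|_{\infty}$ comes for free from the same characterization: for any $\Psi \in H^{\infty}(\D)$ with $\|\Psi\|_{\infty} \leq 1$, one direction of Pick's theorem on $\D$ gives the positivity of the Pick matrix for $\Psi$ at $j(x_1),\dots,j(x_n)$, which is precisely the condition needed to conclude $\|\Psi \circ j\|_{\M} \leq \|\Psi\|_{\infty}$. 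Applying this to $\Phi$ and rescaling finishes the proof.

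The only mildly delicate step is the limit extraction, since $X$ may be uncountable; nets combined with Montel's theorem (equivalently, weak-$*$ compactness of the closed unit ball of $H^{\infty}(\D)$ together with the fact that weak-$*$ convergence implies pointwise convergence on $\D$) handle it cleanly.
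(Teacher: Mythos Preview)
Your argument is correct. The normalization should really be $\|\phi\|_{\M}=1$ (rather than $\le 1$) so that the sandwich $\|\phi\|_{\M}\le\|\Phi\|_\infty\le 1$ forces equality, but your closing remark about rescaling makes clear you have this in mind. The net/Montel extraction is fine: the closed unit ball of $H^\infty(\D)$ is compact (in fact metrizable) in the compact-open topology, and cofinality of the subnet guarantees that each singleton $\{x\}$ is eventually contained in $F_\alpha$, so the interpolation condition passes to the limit.

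Your route, however, is genuinely different from the paper's. The paper simply invokes \cite[Theorems~6.5 and~8.30]{AMp} (equivalently \cite[Theorem~4.19]{Hartz2022}): the general fact that for an irreducible complete Nevanlinna--Pick space embedded via $b:X\to\B_d$, the multiplier algebra $\M$ is isometrically the restriction of $\mathrm{Mult}(H_d^2)$ to $b(X)$; specializing to $d=1$ and using $\mathrm{Mult}(H^2)=H^\infty(\D)$ isometrically gives the lemma in one line. Your proof instead reproves the $d=1$ case from scratch, using only the classical Pick theorem and a normal-families compactness argument. The paper's approach is shorter and situates the result within the general CNP framework; yours is self-contained and avoids the heavier Drury--Arveson/commutant-lifting machinery that underlies the cited theorems.
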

\begin{proof}
This immediately follows from \cite[Theorems 6.5 and 8.30]{AMp} (see also \cite[Theorem 4.19]{Hartz2022}).
\end{proof}


\begin{definition}\label{def:two-point Pick}
We say that an RKHS $H_k$ on a set $X$ satisfies the  {\bf two-point Nevanlinna--Pick property} if whenever $x_1, x_2$ are points in $X$ and $w_1,w_2$ are complex numbers such that
\begin{equation*}
\begin{bmatrix}
    (1-|w_1|^2)k(x_1, x_1)&(1-w_1\overline{w_2})k(x_1, x_2)\\
    (1-w_2\overline{w_1})k(x_2, x_1)&(1-|w_2|^2)k(x_2, x_2)
\end{bmatrix}
\end{equation*}
is positive semidefinite, then there exists a $\phi\in\M$ with $\phi(x_i)=w_i$ for $i=1,2$ and $\|\phi\|_{\M}\le 1$. This matrix is nothing but the $m=1$, $n=2$ case of the matrix in Definition \ref{def:CNP}.
\end{definition}

Obviously, every complete Nevanlinna--Pick space satisfies this property. 

In this paper, a solution of the extremal problem for the following 
pseudo-distance on a set $X$ derived from an RKHS $H_k$ is important:

\begin{equation*}
    d_k(x,y):=\sqrt{1-\frac{|k(x,y)|^2}{k(x,x)k(y,y)}}\;\;\;\;(x,y\in X).
\end{equation*}

Here is the precise statement of the extremal theorem.

\begin{lemma}[\mbox{\cite[Proposition 3.1]{Har2017}}]\label{lemma2}
Let $H_k$ be an irreducible RKHS on a set $X$ normalized at $x_0\in X$. Suppose $H_k$ satisfies the two-point Nevanlinna--Pick property. Then
\begin{equation*} 
    \sup\{\mathrm{Re}\phi(x)\;|\;\|\phi\|_{\M}\le 1 \;\mathrm{and}\; \phi(x_0)=0\}=d_k(x,x_0)
    =\left(1-\frac{1}{k(x,x)}\right)^{1/2}
\end{equation*}
for any $x\in X$, and this number is strictly positive if $x\ne x_0$. Moreover, there is a unique multiplier $\phi_{x}$ that achieves the supremum if $x\ne x_0$.
\end{lemma}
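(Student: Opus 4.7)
The plan is to reduce everything to the $2\times 2$ Nevanlinna--Pick problem with nodes $x_0$ and $x$, using the necessary condition $I-M_\phi M_\phi^*\ge 0$ for contractive multipliers in one direction and the assumed two-point Pick property in the other. The normalization $k(\cdot,x_0)\equiv 1$ immediately gives $d_k(x,x_0)^2=1-1/k(x,x)$, so only the extremal characterization needs work.

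For the upper bound I would observe that a contractive multiplier $\phi$ with $\phi(x_0)=0$ makes the matrix $\bigl[(1-\phi(x_i)\overline{\phi(x_j)})k(x_i,x_j)\bigr]$, with $x_1=x_0$ and $x_2=x$, positive semidefinite; the determinant condition then gives $|\phi(x)|^2\le 1-1/k(x,x)=d_k(x,x_0)^2$, hence $\mathrm{Re}\,\phi(x)\le d_k(x,x_0)$. For the reverse inequality, I would take $w_1=0$ and $w_2=d_k(x,x_0)$; the resulting Pick matrix is the rank-one matrix $\bigl[\begin{smallmatrix}1&1\\1&1\end{smallmatrix}\bigr]$, which is positive semidefinite, so the two-point Pick property supplies a $\phi$ attaining the supremum. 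The strict positivity assertion for $x\ne x_0$ is immediate from irreducibility: linear independence of $k(\cdot,x_0)$ and $k(\cdot,x)$ makes the Cauchy--Schwarz inequality strict, forcing $k(x,x)>1$.

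The main obstacle, and the only nontrivial part, is uniqueness. If $\phi$ attains the supremum, then $\mathrm{Re}\,\phi(x)=d_k(x,x_0)$ combined with $|\phi(x)|\le d_k(x,x_0)$ forces $\phi(x)=d_k(x,x_0)$ (real and positive), so the Pick matrix associated to $\phi$ becomes exactly the rank-one matrix $\bigl[\begin{smallmatrix}1&1\\1&1\end{smallmatrix}\bigr]$. Interpreting this matrix as the Gram matrix of the two unit vectors $(I-M_\phi M_\phi^*)^{1/2}k_{x_0}$ and $(I-M_\phi M_\phi^*)^{1/2}k_x$, the inner product being $1$ forces them to coincide, hence $(I-M_\phi M_\phi^*)(k_x-k_{x_0})=0$. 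Unpacking $M_\phi^*(k_x-k_{x_0})=d_k(x,x_0)\,k_x$ and evaluating the identity $M_\phi M_\phi^*(k_x-k_{x_0})=k_x-k_{x_0}$ at an arbitrary $y\in X$ yields $d_k(x,x_0)\,\phi(y)\,k(y,x)=k(y,x)-1$. Irreducibility guarantees $k(y,x)\ne 0$, so
\begin{equation*}
\phi(y)=\frac{1}{d_k(x,x_0)}\left(1-\frac{1}{k(y,x)}\right)\qquad(y\in X),
\end{equation*}
a formula depending only on $k$, $x$, and $x_0$. This establishes uniqueness of the extremal multiplier.
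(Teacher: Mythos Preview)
The paper does not supply its own proof of this lemma; it is quoted verbatim from \cite[Proposition 3.1]{Har2017} and used as a black box. So there is no in-paper argument to compare against.

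Your proof is correct and is essentially the standard argument. A few confirming remarks: the Pick matrix for the data $w_1=0$, $w_2=d_k(x,x_0)$ indeed collapses to $\bigl[\begin{smallmatrix}1&1\\1&1\end{smallmatrix}\bigr]$ because $(1-d_k(x,x_0)^2)k(x,x)=1$ under the normalization, so existence of an extremal $\phi$ is immediate from the two-point Pick property. For uniqueness, your identification of that rank-one matrix with the Gram matrix of $(I-M_\phi M_\phi^*)^{1/2}k_{x_0}$ and $(I-M_\phi M_\phi^*)^{1/2}k_x$ is exactly right (using $M_\phi^*k_y=\overline{\phi(y)}k_y$), and the equality case of Cauchy--Schwarz then gives $(I-M_\phi M_\phi^*)(k_x-k_{x_0})=0$. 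The evaluation step is clean: since $M_\phi^*(k_x-k_{x_0})=d_k(x,x_0)k_x$, one gets $d_k(x,x_0)\,\phi(y)\,k(y,x)=k(y,x)-1$, and irreducibility ($k(y,x)\ne 0$) yields the explicit closed formula for $\phi$. That formula is in fact the one used implicitly throughout the paper (it is the function $\langle b(\cdot),\hat b(\lambda)\rangle$ appearing in the proof of Theorem~\ref{theorem1}).
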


\begin{remark}\label{remark:invariant}
\upshape Let $H_k$ be an irreducible RKHS on a set $X$. For each point $x_0\in X$, we can construct a normalized reproducing kernel $k_{x_0}$ by equation (\ref{equation:normalization}). Then, it is easy to check that 
\begin{equation*}
d_k(x,y)=d_{k_{x_0}}(x,y)
\end{equation*}
holds for every $x,y\in X$. 
\end{remark}



Here is the main theorem of this section.

\begin{theorem}\label{theorem1}
An RKHS $H_k$ on a set $X$ normalized at $x_0\in X$ is of weak Hardy type if and only if there exist a dominating set $A\subset\D$ and a bijection $j:X\rightarrow A$ such that $j\in\M$, $j(x_0)=0$ and
\begin{equation}\label{equation1}
k(x,y)=\frac{1}{1-j(x)\overline{j(y)}}\;\;\;\;(x,y\in X).
\end{equation}
Hence, 
\begin{equation*}
U: H^2\rightarrow H_k, \;\;\;\; f\mapsto f\circ j
\end{equation*}
is a unitary operator.
\end{theorem}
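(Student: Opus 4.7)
I plan to handle the two directions of the equivalence separately, with the ``if'' direction essentially immediate from Lemma \ref{lemma1} and the ``only if'' direction carrying the real content.

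For the ``if'' direction, assume the kernel has the stated form with $j\colon X\to A\subset\D$ a bijection onto a dominating set. Lemma \ref{lemma1} yields that every $\phi\in\M$ is of the form $\Phi\circ j$ for some $\Phi\in\HH$ with $\|\phi\|_{\M}=\|\Phi\|_{\infty}$. Since $A$ is dominating, $\|\Phi\|_{\infty}=\sup_{a\in A}|\Phi(a)|=\|\phi\|_{\infty}$, so $H_k$ is of weak Hardy type. Because a dominating set is a uniqueness set for $H^{2}$ (Remark \ref{remark:uniqueness}(2)), the subspace $I(A)$ appearing in Remark \ref{remarkHartz} is trivial, and the composition operator $U\colon H^{2}\to H_k$ is genuinely unitary rather than merely isometric.

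For the converse, assume $H_k$ is of weak Hardy type, and apply Theorem \ref{universality} to obtain $1\le d\le\infty$ and an injection $b\colon X\to\B_{d}$ with $b(x_0)=0$ and $k(x,y)=(1-\langle b(x),b(y)\rangle_{\C^d})^{-1}$. Fix any $x_1\in X\setminus\{x_0\}$ and let $j:=\phi_{x_1}$ be the unique extremal multiplier supplied by Lemma \ref{lemma2}. By construction $j(x_0)=0$, $j(x_1)=d_k(x_1,x_0)>0$, and $\|j\|_{\M}\le 1$; weak Hardy type upgrades this to $\|j\|_{\infty}\le 1$, so $j$ sends $X$ into $\overline{\D}$. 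The heart of the argument is now the kernel identity
\[
k(x,y)=\frac{1}{1-j(x)\overline{j(y)}}\qquad(x,y\in X).
\]
Equivalently, the auxiliary kernel $m(x,y):=(1-j(x)\overline{j(y)})k(x,y)$ (positive because $\|j\|_{\M}\le 1$, and satisfying $m(x,x_0)=1$ because $j(x_0)=0$ and $k(x,x_0)=1$) must be identically $1$. The RKHS computation $\|m(\cdot,x)-m(\cdot,x_0)\|_{H_m}^{2}=m(x,x)-1$ reduces $m\equiv 1$ to the pointwise rigidity $|j(x)|=d_k(x,x_0)$ for every $x\in X$. The inequality $|j(x)|\le d_k(x,x_0)$ is free: applying Lemma \ref{lemma2} with the rotated competitor $e^{-i\arg j(x)}j$ forces it.

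The main obstacle is the reverse inequality; this is where the weak Hardy hypothesis must be used essentially. I would argue by contradiction: if $|j(y)|<d_k(y,x_0)$ at some $y\neq x_0$, then by uniqueness in Lemma \ref{lemma2} the extremal $\phi_{y}$ at $y$ is not a unimodular scalar multiple of $j$, and in the Agler--McCarthy realisation $b(y)$ fails to be parallel to $b(x_1)$. The strategy is then to combine $j$ and $\phi_{y}$ (through a suitably normalised product or linear combination of extremal multipliers at various base points) to manufacture a multiplier whose $\M$-norm strictly exceeds its supremum norm on $X$, contradicting weak Hardy type; equivalently, this argument forces the image $b(X)$ to lie on the single complex line $\C\cdot b(x_1)$, so that $d$ may be taken to be $1$ in the universality theorem. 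Once the kernel identity is established, injectivity of $j$ is immediate from irreducibility of $k$ (otherwise two distinct points of $X$ would yield the same kernel section), and Lemma \ref{lemma1} combined with weak Hardy type shows that $A:=j(X)$ is dominating, since for every $\Phi\in\HH$ one has $\|\Phi\|_{\infty}=\|\Phi\circ j\|_{\M}=\|\Phi\circ j\|_{\infty}=\sup_{a\in A}|\Phi(a)|$. The unitarity of $U$ then follows exactly as in the ``if'' direction.
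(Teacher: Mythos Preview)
Your ``if'' direction and your reduction of the kernel identity to the pointwise equality $|j(x)|=d_k(x,x_0)$ are both fine, and they match the paper's strategy closely (the paper also reduces to showing $b(X)$ lies on a single complex line in the Agler--McCarthy model, which is the same statement).

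The genuine gap is precisely the step you flag as ``the main obstacle'' and then leave as a sketch. You propose to combine $j$ and $\phi_y$ ``through a suitably normalised product or linear combination'' to produce a multiplier whose $\M$-norm strictly exceeds its sup norm, but you give no candidate and no mechanism for controlling the sup norm. This is the entire technical content of the theorem. The paper's argument is quite specific: writing $j=\langle b(\cdot),\hat b(\lambda)\rangle$ and choosing any $e_\alpha\perp\hat b(\lambda)$ from an orthonormal basis of $\C^d$, it considers the perturbation
\[
\psi:=\langle b(\cdot),\hat b(\lambda)\rangle+\tfrac{1}{2}\langle b(\cdot),e_\alpha\rangle^2.
\]
The delicate point is showing $\|\psi\|_\infty\le 1$, which the paper does by an explicit single-variable polynomial estimate (bounding $|\psi(x)|^2$ by $g(t)=\tfrac14 t^4-t^3+\tfrac12 t^2+t+\tfrac14$ with $t=|\langle b(x),\hat b(\lambda)\rangle|$, and checking $g\le 1$ on $[0,1]$). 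Once $\|\psi\|_\infty\le 1$, weak Hardy type gives $\|\psi\|_{\M}\le 1$; since $\psi(x_0)=0$ and $\psi(\lambda)=d_k(\lambda,x_0)$, the uniqueness in Lemma \ref{lemma2} forces $\psi=j$ and hence $\langle b(\cdot),e_\alpha\rangle\equiv 0$. Your contradiction scheme points in the right direction, but without an explicit competitor and a sup-norm bound there is no proof.

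There is a second, smaller gap in your dominating argument. You assert $\|\Phi\|_\infty=\|\Phi\circ j\|_{\M}$ for every $\Phi\in\HH$, but Lemma \ref{lemma1} only gives $\|\Phi\circ j\|_{\M}=\|\Phi'\|_\infty$ for \emph{some} extension $\Phi'$ of $\Phi|_A$; you cannot conclude $\Phi'=\Phi$ until you already know $A$ is a set of uniqueness for $H^2$. The paper handles this by first proving uniqueness directly (a Blaschke-product argument: if $A$ failed the Blaschke condition one could build a nonconstant contractive multiplier with $|\theta(x_0)|=1$, violating the maximum modulus principle for multipliers), and only then deducing dominating.
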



\begin{proof}
If $H_k$ is an irreducible complete Nevanlinna--Pick space normalized at $x_0$, Theorem \ref{universality} shows that there exist a number $1\le d\le \infty$ and an injection $b:X\rightarrow\B_d$ with $b(x_0)=0$ such that
\begin{equation*}
    k(x,y)=\frac{1}{1-\langle b(x), b(y)\rangle_{\C^d}}\;\;\;\;(x,y\in X).
\end{equation*}
First, we will prove that $\langle b(\cdot), \xi\rangle_{\C^d}$ is a contractive multiplier of $H_k$ for any $\xi\in \C^d$ with $\|\xi\|_{\C^d}= 1$. Let $\{e_{\alpha}\}_{\alpha\in A}$ be an orthonormal basis for $\C^d$ containing $\xi$. By Parseval's identity (see e.g., \cite[Theorem 4.18]{Rud}), we have
\begin{align*}
   &(1-\langle b(x),\xi\rangle\overline{\langle b(y),\xi\rangle})k(x,y)\\
   &=\frac{1-\langle b(x), \xi\rangle \langle \xi, b(y)\rangle}{1-\langle b(x), b(y)\rangle}\\
   &= \frac{1-\displaystyle\sum_ {\alpha\in A}\langle b(x), e_{\alpha}\rangle \langle e_{\alpha}, b(y)\rangle
   +\displaystyle\sum_{\alpha\in A:e_{\alpha}\ne\xi}\langle b(x), e_{\alpha}\rangle \langle e_{\alpha}, b(y)\rangle}
   {1-\langle b(x), b(y)\rangle}\\
   &=\frac{1-\langle b(x), b(y)\rangle+\displaystyle\sum_{\alpha\in A:e_{\alpha}\ne \xi}\langle b(x), e_{\alpha}\rangle \langle e_{\alpha}, b(y)\rangle}{1-\langle b(x), b(y)\rangle}\\
   &=1+\displaystyle\sum_{\alpha\in A:e_{\alpha}\ne\xi}\langle b(x), e_{\alpha}\rangle \langle e_{\alpha}, b(y)\rangle k(x,y)\;\;\;\;(x,y\in X).
\end{align*}
Therefore, $\displaystyle (1-\langle b(x),\xi\rangle\overline{\langle b(y),\xi\rangle})k(x,y)$ is positive semidefinite by the Schur product theorem \cite[Theorem A.1]{AMp}. Hence, \cite[Corollary 2.37]{AMp} tells that $\langle b(\cdot), \xi \rangle$ is a contractive multiplier of $H_k$.

We fix a point $\lambda\in X\setminus \{x_0\}$ and set $\displaystyle \bb:=\frac{b(\lambda)}{\|b(\lambda)\|}$ (n.b., $b(\lambda)\ne 0$ because $b$ is injective and $b(x_0)=0$). Then, $\langle b(\cdot), \bb\rangle$ is a contractive multiplier of $H_k$ such that $\langle b(x_0), \bb\rangle=0$ and 
\begin{equation*}
\langle b(\lambda),\bb\rangle=\|b(\lambda)\|=\left(1-\frac{1}{k(\lambda, \lambda)}\right)^{1/2}. 
\end{equation*}
Let $\{e_{\alpha}\}_{\alpha\in A}$ be an orthonormal basis for $\C^d$ containing $\bb$. For any $\alpha\in A$ with $e_{\alpha}\ne \bb$, we define a multiplier of $H_k$ by
\begin{equation*}
\langle b(\cdot),\bb\rangle+\frac{1}{2}\langle b(\cdot), e_{\alpha}\rangle^2.
\end{equation*}
Obviously, $\langle b(x_0), \bb\rangle+\frac{1}{2}\langle b(x_0),e_{\alpha}\rangle^2=0$. Moreover, we have
\begin{equation*}
\langle b(\lambda),\bb\rangle+\frac{1}{2}\langle b(\lambda), e_{\alpha}\rangle^2=\langle b(\lambda), \bb\rangle=\|b(\lambda)\|=\displaystyle\left(1-\frac{1}{k(\lambda,\lambda)}\right)^{1/2}
\end{equation*}
because $\bb$ and $e_{\alpha}$ are orthogonal.
We will prove that $\langle b(\cdot),\bb\rangle+\frac{1}{2}\langle b(\cdot),e_{\alpha}\rangle^2$ is contractive.
Since $|\langle b(x), \bb\rangle|^2+|\langle b(x), e_{\alpha}\rangle|^2\le \|b(x)\|^2<1$, 
we have
\begin{align*}
&|\langle b(x), \bb\rangle+\frac{1}{2}\langle b(x), e_{\alpha}\rangle^2|^2\\
&\le (|\langle b(x),\bb\rangle|+\frac{1}{2}|\langle b(x), e_{\alpha}\rangle|^2)^2\\
&= |\langle b(x), \bb\rangle|^2+|\langle b(x), \bb\rangle||\langle b(x), e_{\alpha}\rangle|^2+\frac{1}{4}|\langle b(x), e_{\alpha}\rangle|^4\\
&\le |\langle b(x),\bb\rangle|^2+|\langle b(x), \bb\rangle|(1-|\langle b(x),\bb\rangle|^2)+\frac{1}{4}(1-|\langle b(x),\bb\rangle|^2)^2\\
&=\frac{1}{4}|\langle b(x), \bb\rangle|^4-|\langle b(x),\bb\rangle|^3+\frac{1}{2}|\langle b(x),\bb\rangle|^2+|\langle b(x), \bb\rangle|+\frac{1}{4}
\end{align*}
for all $x\in X$. A tedious calculation enables us to prove that a function 
\begin{equation*}
g(t):=\frac{1}{4}t^4-t^3+\frac{1}{2}t^2+t+\frac{1}{4}
\end{equation*}
is monotonically increasing on the interval $[0,1]$ with $g(1)=1$. Thus, we obtain
\begin{equation*}
\sup_{t\in [0,1]}g(t)=1.
\end{equation*}
Hence, we have 
\begin{equation*}
\left\|\langle b(\cdot), \bb\rangle+\frac{1}{2}\langle b(\cdot), e_{\alpha}\rangle^2\right\|_{\infty}\le 1
\end{equation*}
because $\|\langle b(\cdot),\bb\rangle\|_{\infty}$ is less than or equal to $1$.
Since $H_k$ is of weak Hardy type, we have
\begin{equation*}
\left\|\langle b(\cdot), \bb\rangle+\frac{1}{2}\langle b(\cdot), e_{\alpha}\rangle^2\right\|_{\M}=
\left\|\langle b(\cdot), \bb\rangle+\frac{1}{2}\langle b(\cdot), e_{\alpha}\rangle^2\right\|_{\infty}\le 1.
\end{equation*}
Therefore, the uniqueness part of Lemma \ref{lemma2} implies 
\begin{equation*}
\langle b(\cdot),\bb\rangle+\frac{1}{2}\langle b(\cdot), e_{\alpha}\rangle^2=\langle b(\cdot),\bb\rangle
\end{equation*}
for every $\alpha\in A$ with $e_{\alpha}\ne \bb$. So, we conclude that $\langle b(x), e_{\alpha}\rangle= 0$ for all $x\in X$ if $e_{\alpha}\ne \bb$. Thus, we have
\begin{equation}\label{equation2}
    b(x)=\sum_{\alpha\in A}\langle b(x), e_{\alpha}\rangle e_{\alpha}=\langle b(x), \bb\rangle \bb
\end{equation}
for all $x\in X$. We define $j:X\rightarrow \D$ by $j(x):=\langle b(x), \bb\rangle$. Since $b(x_0)=0$, we get $j(x_0)=0$.
The injectivity of $b$ and equation (3) imply that $j$ must be injective.
Moreover, using equation (\ref{equation2}) we can prove 
\begin{equation*}
    k(x,y)=\frac{1}{1-j(x)\overline{j(y)}}\;\;\;\;(x,y\in X).
\end{equation*}


 Next, we will show that $A:=j(X)$ is a set of uniqueness for $H^2$. On the contrary, suppose that $A$ is not. Then we have
\begin{equation*}
\sum_{x\in X}(1-|j(x)|)<\infty
\end{equation*}
(see Remark \ref{remark:uniqueness}). Therefore, \cite[Theorem 15.21]{Rud} tells us that
\begin{equation*}
\Theta(z):=\prod_{x\in X\setminus\{x_0\}}\frac{j(x)-z}{1-\overline{j(x)}z}\frac{|j(x)|}{j(x)}\;\;\;\; (z\in\D)
\end{equation*}
converges and defines an inner function. Moreover, $\Theta$ has no zeros except the points $j(x)$ ($x\in X\setminus\{x_0\}$). Since $\Theta$ is inner, \cite[Corollary 2.37 and Theorem 3.24]{AMp} show that 
\begin{equation*}
\frac{1-\Theta (j(x))\overline{\Theta (j(y))}}{1-j(x)\overline{j(y)}}\;\;\;\; (x,y\in X)
\end{equation*}
is positive semidefinite on $X\times X$. Hence by \cite[Corollary 2.37]{AMp} again, $\Theta\circ j$ is a multiplier of $H_k$. We define another multiplier $\theta$ of $H_k$ by
\begin{equation*}
\theta(x):=\frac{\Theta(j(x))}{\Theta(0)}\;\;\;\;(x\in X).
\end{equation*}
By the construction of $\Theta$, we have
\begin{align*}
\theta(x)&=
\begin{cases}
1 & (x=x_0)\\
0 & (x\ne x_0)
\end{cases}.
\end{align*}
Since $H_k$ is of weak Hardy type, $\theta$ is a contractive multiplier of $H_k$. However, the maximum modulus principle for multipliers \cite[Lemma 2.2]{AHMR} enable us to show that $\theta$ must be a constant function, a contradiction. Therefore, $A$ is a set of uniqueness for $H^2$. Hence, Remark \ref{remarkHartz} implies that
\begin{equation*}
U: H^2\rightarrow H_k, \;\;\;\; f\mapsto f\circ j
\end{equation*}
is a unitary operator.

It remains to prove that $A=j(X)$ is dominating. For any $\Psi\in H^{\infty}(\D)$, \cite[Corollary 2.37 and Theorem 3.24]{AMp} show that
\begin{equation*}
\frac{\|\Psi\|_{\infty}^2-\Psi(j(x))\overline{\Psi(j(y))}}{1-j(x)\overline{j(y)}}\;\;\;\;(x,y\in X)
\end{equation*}
is positive semidefinite on $X\times X$. Hence $\Psi\circ j$ is a multiplier of $H_k$ with 
\begin{equation*}
\|\Psi\circ j\|_{\M}\le \|\Psi\|_{\infty}
\end{equation*}
by \cite[Corollary 2.37]{AMp}. Thus, Lemma \ref{lemma1} yields a $\Phi\in\HH$ such that $\Psi\circ j=\Phi\circ j$ and $\|\Psi\circ j\|_{\M}=\|\Phi\|_{\infty}$. We note that 
\begin{equation*}
\sup\{|\Psi(z)|\;|\;z\in A\}=\|\Psi\circ j\|_{\M}=\|\Phi\|_{\infty}
\end{equation*}
because $H_k$ is of weak Hardy type.
Since $A$ is a set of uniqueness for $H^2$, we have $\Psi=\Phi$. Hence, we get
\begin{equation*}
\sup\{|\Psi(z)|\;|\;z\in A\}=\|\Phi\|_{\infty}=\|\Psi\|_{\infty}.
\end{equation*}
Therefore, we conclude that $A$ is dominating.


Conversely, with a reproducing kernel $k$ given by equation (\ref{equation1}), $H_k$ is a normalized irreducible complete Nevanlinna--Pick space. Since $A$ is dominating, 
Lemma \ref{lemma1} enable us to prove that $\|\phi\|_{\M}=\|\phi\|_{\infty}$ for any $\phi\in\M$. Therefore, $H_k$ is of weak Hardy type.
\end{proof}

Note that Hartz \cite{Har2015} proved that every irreducible complete Nevanlinna--Pick space whose multipliers are hyponormal is isomorphic to the Hardy space $H^2$ as an RKHS. 
As an application of Theorem \ref{theorem1}, we can see that whenever $H_k$ is of weak Hardy type, every multiplier is hyponormal.
However, the author does not know how to prove this fact without using Theorem \ref{theorem1}.


\section{Characterization of $X$ equipped with a complex structure}

In this section, we will consider normalized irreducible complete Nevanlinna--Pick spaces of holomorphic functions on a reduced complex space $X$ such that $\M$ is isometrically equal to
 $H^{\infty}(X)$. Here, $H^{\infty}(X)$ is the Banach algebra of all bounded holomorphic functions on $X$. We begin by recalling the definition of reduced complex spaces.

Let $U$ be an open set in $\C^n$. A subset $V\subset U$ is said to be an {\bf analytic subset of $U$} if for any $z\in U$ there exist an open neighborhood $U_z\subset\C^n$ of $z$ and holomorphic functions $f_1,\ldots, f_m$ on $U_z$ such that
\begin{equation*}
V\cap U_z=\{\zeta\in U_z\;|\;f_1(\zeta)=\cdots =f_m(\zeta)=0\}.
\end{equation*}
A function $f:V\rightarrow \C$ is holomorphic if for each $z\in V$ 
there exist an open neighborhood $U_z\subset\C^n$ of $z$ and a holomorphic function $F_z$ defined on $U_z$ such that $F_z(\zeta)=f(\zeta)$ for all $\zeta\in V\cap U_z$.
A mapping $F=(f_1,\ldots, f_m):V\rightarrow W$ between analytic subsets $V, W$ of open sets $U_V\subset\C^n$, $U_W\subset\C^m$ is holomorphic if each $f_i$ is holomorphic on $V$. A holomorphic mapping $F:V\rightarrow W$ is biholomorphic if it has a holomorphic inverse.

\begin{definition}\label{def:complex space}
Let $X$ be a second countable Hausdorff topological space. We say that $X$ is a {\bf reduced complex space} if there exist an open covering $\{X_{\alpha}\}_{\alpha\in A}$ of $X$ and homeomorphisms $\varphi_{\alpha}:X_{\alpha}\rightarrow V_{\alpha}$, where $V_{\alpha}$ is an analytic subset of $U_{\alpha}\subset\C^{n_{\alpha}}$, such that $\varphi_{\beta}\circ\varphi_{\alpha}^{-1}:\varphi_{\alpha}(X_{\alpha}\cap X_{\beta})\rightarrow\varphi_{\beta}(X_{\alpha}\cap X_{\beta})$ is biholomorphic for each non-empty intersection $X_{\alpha}\cap X_{\beta}$. 

We say that a function $f:X\rightarrow\C$ is holomorphic if for any chart $\varphi_{\alpha}:X_{\alpha}\rightarrow V_{\alpha}$ the function $f|_{X_{\alpha}}\circ\varphi_{\alpha}^{-1}:V_{\alpha}\rightarrow\C$ is holomorphic.
\end{definition}

\begin{examples}
\upshape (1) Every (second countable) complex manifold is a reduced complex space.

(2) Every analytic subset of an open set in $\C^n$ is a reduced complex space. In particular, Neil's parabola
\begin{equation*}
\{(z,w)\in \C^2\;|\;z^2-w^3=0\}
\end{equation*}
is an example of a reduced complex space but not a complex manifold. In fact, it has the origin $(0,0)$ as its singular point. Namely, there is no neighborhood $\Omega$ of $(0,0)$ such that $\Omega$ becomes a complex manifold.

\end{examples}

\begin{remark}
\upshape Reduced complex spaces are usually defined by means of structure sheaves (see e.g., \cite{GR, Gun}). We remark that the usual definition is equivalent to Definition \ref{def:complex space}; see \cite[page 163]{BM}.
\end{remark}

We recall the notion of the dimension of reduced complex spaces; see \cite[Chapter 5]{GR}.

\begin{definition}
Let $X$ be a reduced complex space. Then, for each $x\in X$ there exist an open neighborhood $\Omega$ and finitely many holomorphic functions $f_1, \ldots, f_m$ on $\Omega$ such that
\begin{equation}\label{equation:4}
\{\mathfrak{z}\in\Omega\;|\;f_1(\mathfrak{z})=\cdots =f_m(\mathfrak{z})=0\}=\{x\}.
\end{equation}
Among such systems $f_1, \ldots, f_m$ with equation $(\ref{equation:4})$, there exists one with minimal $m$. This minimal integer is called the {\bf dimension of $X$ at $x$} and denoted by $\dim_x{X}$. We say that $X$ is {\bf pure dimensional} if 
\begin{equation*}
\dim_x{X}=\dim_y{X}
\end{equation*}
holds for every $x,y\in X$.
\end{definition}

Obviously, $\dim_x{X}=0$ if and only if $x$ is an isolated point of $X$. It is not hard to see that Neil's parabola $\{(z,w)\in\C^2\;|\;z^2-w^3=0\}$ is pure $1$-dimensional. 

\begin{definition}\label{Hardytype}
An RKHS $H_k$ of holomorphic functions on a reduced complex space $X$ is of {\bf Hardy type} if $H_k$ is of weak Hardy type and $\M=H^{\infty}(X)$.
\end{definition}
Of course, the Hardy space $H^2$ is an example of such a space. 

If $D_a=\D\setminus\{a\}$ $(a\ne 0)$ and $H_{k_a}$ is the RKHS of holomorphic functions on $D_a$ given by reproducing kernel 
\begin{equation*}
k_a(z,w)=\frac{1}{1-z\overline{w}}\;\;\;\;(z,w\in D_a), 
\end{equation*}
then Riemann removable singularities theorem \cite[Theorem 10.20]{Rud} implies that $H_k$ is of Hardy type. This example suggests that a characterization problem of RKHS of Hardy type is related to the notion of analytic capacity because Riemann's theorem was strengthened by means of analytic capacity \cite{Gar, Tol}.

\begin{definition}
For a compact set $K\subset\C$, the number
\begin{equation*}
\gamma(K):=\sup\{|f'(\infty)|\;|\;f\in H^{\infty}(\C\setminus K) \;\mathrm{with}\; \|f\|_{\infty}\le 1\}
\end{equation*}
is called the {\bf analytic capacity} of $K$, where $\displaystyle f'(\infty):=\lim_{z\to \infty}z(f(z)-f(\infty))$. 
\end{definition}

For an arbitrary set $E\subset\C$, we define
\begin{equation*}
\gamma(E):=\sup_{K\subset E:\mathrm{compact}}\gamma(K).
\end{equation*} 
We will use the following generalization of Riemann removable singularities theorem to prove the main theorem in this section:

\begin{lemma}[\mbox{\cite[Theorem 1,4]{Gar}}]\label{capacity}
Let $E$ be a relatively closed subset of an open set $U\subset\C$ with $\gamma(E)=0$. Then, any $f\in H^{\infty}(U\setminus E)$ has an extension in $H^{\infty}(U)$.
\end{lemma}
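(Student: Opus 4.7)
The plan is to reduce Lemma \ref{capacity} to the classical Ahlfors removability theorem: a compact set $K \subset \C$ has $\gamma(K) = 0$ if and only if every bounded holomorphic function on $\C \setminus K$ is constant. Granting this deep result, the rest is a routine Cauchy-integral patching argument.

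First I would localize. Since $\gamma(E) = 0$ forces $E$ to be nowhere dense (any set of positive planar Lebesgue measure has positive analytic capacity), any two holomorphic extensions to an open set agree by the identity theorem on the dense complement of $E$. It therefore suffices to produce, for each $z_0 \in E$, an open disk $\Delta \Subset U$ centered at $z_0$ and a holomorphic extension of $f|_{\Delta \setminus E}$ to $\Delta$ of the same sup norm; these local extensions then patch to a global $\tilde f \in H^\infty(U)$.

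For the local step, choose a smooth Jordan curve $C \subset \Delta \setminus E$ enclosing $z_0$. Such a curve exists because $\gamma(E) = 0$ implies $E$ is totally disconnected (every continuum has positive analytic capacity), so one may separate $z_0$ from the rest of the compact set $E \cap \overline \Delta$ by a small smooth curve in the complement. Let $\Omega$ be the Jordan domain bounded by $C$, and put $K := \overline \Omega \cap E$, a compact subset of $E$ with $\gamma(K) = 0$. Define
\begin{equation*}
F(w) := \frac{1}{2\pi i}\int_C \frac{f(\zeta)}{\zeta - w}\, d\zeta,
\end{equation*}
holomorphic separately on $\Omega$ and on $\C \setminus \overline \Omega$, and vanishing at $\infty$. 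Glue
\begin{equation*}
g := \begin{cases} f - F & \text{on } \Omega \setminus K,\\ -F & \text{on } \C \setminus \overline \Omega. \end{cases}
\end{equation*}
Since $f$ is analytic in a neighborhood of $C$, the Plemelj--Sokhotski jump relation $F_+ - F_- = f$ on $C$ forces the two pieces of $g$ to agree on $C$, so $g$ extends continuously---hence holomorphically, by Morera---to $\C \setminus K$; moreover $g$ is bounded and vanishes at $\infty$.

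The Ahlfors characterization, applied to $K$, then forces $g \equiv 0$. Hence $f = F$ on $\Omega \setminus K$, and $F|_\Omega$ is the desired bounded holomorphic extension, with $\|F\|_{L^\infty(\Omega)} \le \|f\|_\infty$ by the maximum modulus principle together with the density of $\Omega \setminus K$ in $\Omega$. The main obstacle is the Ahlfors removability theorem itself, a deep classical fact I would simply cite from \cite{Gar}; everything else---the topological construction of the curve $C$, the jump formula, the Morera patching, and the maximum-modulus norm bound---is routine.
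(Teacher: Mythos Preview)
The paper does not prove this lemma at all; it simply records it as \cite[Theorem~1.4]{Gar} and moves on. So there is no ``paper's own proof'' to compare against---you have supplied an argument where the author chose to cite.

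That said, your sketch is essentially the standard localization-plus-Cauchy-integral argument one finds in Garnett, and it is sound. A couple of small points worth tightening if you keep it: (i) the phrase ``separate $z_0$ from the rest of $E\cap\overline\Delta$'' is slightly misleading---the curve $C$ may well enclose other points of $E$ besides $z_0$; what you actually need (and use) is only that $C\subset\Delta\setminus E$ and that $z_0$ lies in the bounded component $\Omega$. The existence of such a $C$ follows, as you indicate, from the fact that $E\cap\overline\Delta$ is compact and totally disconnected, hence zero-dimensional, so $z_0$ has arbitrarily small clopen neighborhoods in $E\cap\overline\Delta$ which can then be separated from their complements by smooth Jordan curves in $\Delta\setminus E$. (ii) Once you invoke the Ahlfors characterization of $\gamma(K)=0$ from \cite{Gar}, you are citing a result that is essentially equivalent in depth to the lemma itself; this is fine and is exactly what the author does, but it means the ``proof'' is really a reduction of the relatively-closed case to the compact case rather than an independent argument.

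In short: correct, and more detailed than what the paper offers, but ultimately resting on the same citation.
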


The next lemma is a list of well-known facts on the analytic capacity. Since we do not know a suitable reference for these facts, a complete proof is provided.

\begin{lemma}\label{lemma:capacity}
Let $U\subset\C$ be a connected open set. If every $f\in H^{\infty}(U\setminus E)$ has an analytic extension to $U$, then we have the following:
\begin{enumerate}
\item $U\setminus E$ is dense in $U$.
\item Every $f\in H^{\infty}(U\setminus E)$ has a unique norm preserving analytic extension to $U$.
\item $\gamma(E)=0$.
\end{enumerate}
\end{lemma}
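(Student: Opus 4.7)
The plan is to dispose of the three parts in order, with (1) feeding into (2) via density, and (1)--(2) feeding into (3) via a Liouville-style gluing argument.

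For part (1), I would argue by contrapositive: assume $U\setminus E$ fails to be dense in $U$, so that $E$ has nonempty interior (noting $E$ must be relatively closed in $U$ for $H^\infty(U\setminus E)$ to make sense), and pick $a\in \mathrm{int}(E)$ with a small disk $B(a,r)\subset E$. Then $f(z):=1/(z-a)$ is bounded by $1/r$ and holomorphic on $U\setminus E$. Any purported analytic extension $F$ on $U$ would make $(z-a)F(z)-1$ holomorphic on the connected set $U$ and vanish on the nonempty open set $U\setminus E$, forcing $F(z)=1/(z-a)$ throughout $U$ by the identity theorem, contradicting analyticity at $a$.

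For part (2), uniqueness is immediate: two extensions of $f\in H^\infty(U\setminus E)$ are continuous functions on $U$ agreeing on the dense subset $U\setminus E$ guaranteed by (1). For norm preservation, the bound $\|F\|_{H^\infty(U)}\ge \|f\|_{H^\infty(U\setminus E)}$ is trivial. Conversely, if $|F(z_0)|>\|f\|_\infty$ at some $z_0\in U$, then by continuity $|F|>\|f\|_\infty$ on a whole neighborhood of $z_0$ in $U$; density from (1) forces this neighborhood to meet $U\setminus E$, where $F=f$, giving a contradiction.

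For part (3), I would verify $\gamma(K)=0$ for every compact $K\subset E$ by a gluing/Liouville argument. Fix $f\in H^\infty(\C\setminus K)$ with $\|f\|_\infty\le 1$. Its restriction to $U\setminus E$ lies in $H^\infty(U\setminus E)$, so it has an extension $F\in H^\infty(U)$ with $\|F\|_\infty\le 1$ by (2). On each connected component $W$ of the open set $U\setminus K$, both $F$ and $f$ are holomorphic, and they coincide on $W\cap(U\setminus E)$, which is nonempty and open because $U\setminus E$ is dense in $U$ by (1); the identity theorem then gives $F=f$ throughout $U\setminus K$. Because $K\subset E\subset U$, we have $U\cup(\C\setminus K)=\C$, so $F$ and $f$ glue to a well-defined holomorphic function $g$ on $\C$ with $\|g\|_\infty\le 1$. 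Liouville forces $g$, and hence $f$, to be constant, whence $f'(\infty)=0$. Taking the supremum over such $f$ yields $\gamma(K)=0$, and then $\gamma(E)=0$ by the definition of analytic capacity for general sets.

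The only genuinely delicate step I anticipate is the identity-theorem argument in part (3) needed to certify that the gluing is consistent on every component of $U\setminus K$; once density from (1) is in hand, this is routine, so I expect no substantive obstacle, and the lemma should reduce to a clean application of classical one-variable function theory.
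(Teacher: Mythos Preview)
Your arguments for (1) and (2) match the paper's almost verbatim: the paper also uses $1/(z-a)$ together with the identity theorem for (1), and simply says (2) ``immediately follows from (1)'' where you spell out the density/continuity reasoning.

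For (3) you take a genuinely different route. The paper first shows, as you do, that every $f\in H^\infty(U\setminus K)$ extends to $H^\infty(U)$ (restrict to $U\setminus E$, extend, then use density from (1) and the identity theorem to recover agreement on all of $U\setminus K$), but then simply invokes \cite[Theorem~1.10(ii)$\Rightarrow$(iv)]{Tol} to conclude $\gamma(K)=0$. You instead work directly from the definition of $\gamma$: starting with $f\in H^\infty(\C\setminus K)$, you glue its extension on $U$ with $f$ on $\C\setminus K$ to produce a bounded entire function, apply Liouville, and deduce $f'(\infty)=0$. Your approach is more self-contained and avoids the external reference; the paper's is terser but outsources exactly the Liouville step you carry out by hand. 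Both arguments implicitly rely on $K\subset U$ (you write $K\subset E\subset U$; the paper needs $U$ to be a neighborhood of $K$ for Tolsa's criterion), which is indeed how the lemma is applied later with $E\subset\D=U$, so this is harmless in context.
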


\begin{proof}
(1) Suppose that $U\setminus E$ is not dense in $U$. Then, there exists a point $a\in U$ 
such that 
\begin{equation*}
\inf_{z\in U\setminus E}|z-a|>0,
\end{equation*}
and hence $f(z):=\frac{1}{z-a}$ is in $H^{\infty}(U\setminus E)$. However, the identity theorem tells that its analytic extension to $U$ must be $\frac{1}{z-a}$ on $U\setminus \{a\}$, a contradiction.

(2) This immediately follows from (1).

(3) We must show that $\gamma(K)=0$ for every compact subset $K\subset E$. Since $H^{\infty}(U\setminus K)\subset H^{\infty}(U\setminus E)$ (by the restriction map), for any $f\in H^{\infty}(U\setminus K)$ there exists an $F\in H^{\infty}(U)$ such that $f(z)=F(z)$ for all $z\in U\setminus E$ ($\subset U\setminus K$). Hence, we have $f=F$ on $U\setminus K$ by (1).
Thus, $\gamma(K)=0$ follows from \cite[Theorem 1.10(ii)$\Rightarrow$(iv)]{Tol}. 
\end{proof}


We also need the next lemma to prove the main theorem of this section.

\begin{lemma}\label{connected}
If $H_k$ is of Hardy type on a reduced complex space $X$, then $X$ must be connected.
\end{lemma}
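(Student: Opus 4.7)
The plan is to argue by contradiction using an idempotent in the multiplier algebra. Suppose $X = X_1 \sqcup X_2$ is a disjoint union of two non-empty (necessarily open) subsets. First I would verify that the indicator $p := \chi_{X_1}$ is a bounded holomorphic function on $X$. Indeed, $p$ is locally constant, and on each chart $\varphi_\alpha : X_\alpha \to V_\alpha$ the pushforward $p|_{X_\alpha} \circ \varphi_\alpha^{-1}$ is locally constant on the analytic set $V_\alpha$, so at each point it extends locally to a constant (hence holomorphic) function on a neighborhood in $\mathbb{C}^{n_\alpha}$. Since $X_1 \neq \emptyset$, we have $\|p\|_\infty = 1$, and the hypothesis $\mathrm{Mult}(H_k) = H^\infty(X)$ isometrically then gives $p \in \mathrm{Mult}(H_k)$ with $\|p\|_{\mathrm{Mult}(H_k)} = 1$.

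Next I would exploit that $p$ is an idempotent: $p^2 = p$, so $M_p^2 = M_p$. Combined with $\|M_p\| \leq 1$, this forces $M_p$ to be an orthogonal projection on $H_k$, by the classical fact that every contractive idempotent on a Hilbert space is self-adjoint. In particular, $M_p = M_p^*$.

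The contradiction will come from evaluating $M_p$ on kernel functions. On the one hand, the standard eigenfunction identity for adjoint multipliers gives
\begin{equation*}
M_p^* k(\cdot, x) = \overline{p(x)}\, k(\cdot, x) = p(x)\, k(\cdot, x),
\end{equation*}
since $p$ is real-valued. On the other hand, $M_p k(\cdot, x)$ is just the function $y \mapsto p(y) k(y, x)$. Equating these two expressions using $M_p = M_p^*$ yields
\begin{equation*}
p(y) k(y, x) = p(x) k(y, x) \qquad (x, y \in X).
\end{equation*}
Choosing $x \in X_1$ and $y \in X_2$ forces $k(y, x) = 0$, contradicting the irreducibility assumption $k(y, x) \neq 0$. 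Hence $X$ must be connected.

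There is no serious obstacle here; the only mildly non-routine point is checking that the locally constant function $\chi_{X_1}$ qualifies as holomorphic on a reduced complex space (rather than merely a complex manifold), which is handled by the local-extension clause in the definition of holomorphic functions on analytic subsets.
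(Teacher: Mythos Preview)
Your proof is correct. Both your argument and the paper's start the same way: write $X = X_1 \sqcup X_2$, observe that the indicator $p = \chi_{X_1}$ is a bounded holomorphic function of supremum norm $1$, and use the Hardy-type hypothesis to conclude $p$ is a contractive multiplier. From there the two diverge. The paper invokes the maximum modulus principle for multipliers \cite[Lemma 2.2]{AHMR}: since $|p(x_0)| = 1$ at the normalization point $x_0 \in X_1$, the contractive multiplier $p$ must be a unimodular constant, contradicting $p|_{X_2} = 0$. You instead exploit the algebraic relation $p^2 = p$ to conclude $M_p$ is a contractive idempotent on $H_k$, hence an orthogonal projection, and then read off $k(y,x) = 0$ for $x \in X_1$, $y \in X_2$ from the adjoint eigenfunction identity, contradicting irreducibility directly. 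Your route is slightly more self-contained (no appeal to an external lemma, just the classical Hilbert-space fact about contractive idempotents), while the paper's is marginally shorter once that lemma is granted; both are perfectly clean.
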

\begin{proof}
 Suppose that $X$ can be divided into two disjoint non-empty open sets $X_0$ and $X_1$. We may assume that $H_k$ is normalized at $x_0$ and $x_0\in X_0$. We define a holomorphic function $\phi:X\rightarrow\C$ by 
 \begin{align*}
    \phi(x)&=
    \begin{cases}
        1&(x\in X_0),\\
        0&(x\in X_1).
    \end{cases}
 \end{align*}
 Since $H_k$ is of Hardy type, this is a contractive multiplier of $H_k$. Therefore, the maximum modulus principle for multipliers \cite[Lemma 2.2]{AHMR} shows that $\phi$ must be a constant function. This is a contradiction.
\end{proof} 

\begin{example}
\upshape For any $0<s<t<1$, we define an open set $C(s,t)\subset\D$ by
\begin{equation*}
C(s,t)=\{z\in\C\;|\;0\le|z|<s\}\cup\{z\in\C\;|\;t<|z|<1\}.
\end{equation*}
Let $H_{k_{s,t}}$ be a complete Nevanlinna--Pick space of holomorphic functions on $C(s,t)$ given by reproducing kernel 
\begin{equation*}
k_{s,t}(z,w)=\frac{1}{1-z\overline{w}}\;\;\;\;(z,w\in C(s,t)).
\end{equation*}
Then, $H_{k_{s,t}}$ never becomesz of Hardy type because $C(s,t)$ is not connected. However, $H_{k_{s,t}}$ is of weak Hardy because the maximum modulus principle implies that $C(s,t)$ is dominating.
\end{example}

Here is the main result of this section.

\begin{theorem}\label{theorem2}
Let $X$ be a reduced complex space with arbitrary $x_0\in X$. Then, there exists an RKHS $H_k$ of Hardy type on $X$ normalized at $x_0$ if and only if there exist a relatively closed set $E\subset\D$ and a 
biholomorphic map $j$ from X onto $\D\setminus E$ such that 
\begin{itemize}
\item[(1)] $\D\setminus E$ is connected, 
\item[(2)] $\gamma(E)=0$,
\item[(3)] $j$ sends $x_0$ to the origin $0 \in \mathbb{D}$.
\end{itemize}
Moreover, in this case, such an RKHS $H_k$ is unique and its reproducing kernel is given by
\begin{equation*}
     k(x,y)=\frac{1}{1-j(x)\overline{j(y)}}\;\;\;\;(x,y\in X).
\end{equation*}
Hence, 
\begin{equation*}
    U: H^2\rightarrow H_k, \;\;\;\; f\mapsto f\circ j
\end{equation*}
is a unitary operator. 
\end{theorem}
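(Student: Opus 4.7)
The plan is to use Theorem \ref{theorem1} as the starting point and upgrade it using the complex-analytic content of the hypothesis $\M = H^{\infty}(X)$. \textbf{Forward direction.} Assume $H_k$ is of Hardy type on $X$. Theorem \ref{theorem1} produces an injective map $j: X \to A \subset \D$ with $j(x_0) = 0$, dominating image $A$, and $k(x,y) = 1/(1 - j(x)\overline{j(y)})$. Because $\M = H^{\infty}(X)$ isometrically, $j$ is automatically a bounded holomorphic function on $X$; Lemma \ref{connected} gives that $X$ is connected, so $A$ is connected. Injectivity of $j$ shows $\dim_x X \leq 1$ everywhere (the single equation $j(\cdot) = j(x)$ locally cuts out $\{x\}$), and connectedness together with $|X| \geq 2$ excludes isolated points, so $X$ is pure $1$-dimensional.

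The main obstacle is to show $X$ has no singular points. At a putative singular point $x_* \in X$, I would pass to the germ normalization $\nu: \tilde X \to X$, where $\tilde X$ is a smooth one-dimensional complex manifold and $\nu$ is finite. If $x_*$ is a reducible singularity, then $\nu^{-1}(x_*)$ has $r \geq 2$ points, and the open mapping theorem applied to $j \circ \nu$ on each branch forces the images near the distinct preimages to cover overlapping neighborhoods of $j(x_*)$ in $\D$, so $j$ cannot be injective near $x_*$. If $x_*$ is unibranch and singular, then $\nu^{\ast}\mathcal{O}_{X, x_*}$ is a proper subring of $\mathcal{O}_{\tilde X, \tilde x_*} = \C\{t\}$ with the same fraction field, so the $t$-adic orders occurring in $\nu^{\ast}\mathcal{O}_{X, x_*}$ omit $1$; in particular $\nu^{\ast}(j - j(x_*))$ has order $\geq 2$ at $0$, so $j \circ \nu$ is at least $2$-to-$1$ near $0$, again contradicting injectivity of $j$. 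Hence $X$ is a connected Riemann surface, the injective holomorphic map $j$ has nonvanishing derivative, and $j$ is a biholomorphism onto the open set $A$, which we write $\D \setminus E$. Finally, for $\gamma(E) = 0$: any $f \in H^{\infty}(\D \setminus E)$ pulls back to $f \circ j \in H^{\infty}(X) = \M$, and Lemma \ref{lemma1} produces $\Phi \in H^{\infty}(\D)$ with $f \circ j = \Phi \circ j$, so $\Phi$ extends $f$; Lemma \ref{lemma:capacity}(3) then yields $\gamma(E) = 0$.

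\textbf{Reverse direction and uniqueness.} Given $E$ and $j$ as in the statement, set $k(x,y) := 1/(1 - j(x)\overline{j(y)})$. Density of $A := \D \setminus E$ in $\D$ (Lemma \ref{lemma:capacity}(1)) implies $A$ is dominating, so Lemma \ref{lemma1} gives $\|\phi\|_{\M} = \|\phi\|_{\infty}$ for all $\phi \in \M$; via the biholomorphism $j$, any such $\phi = \Phi \circ j$ is bounded holomorphic on $X$, and conversely any $\psi \in H^{\infty}(X)$ transfers through $j^{-1}$ to $H^{\infty}(A)$ and extends to $H^{\infty}(\D)$ by Lemma \ref{capacity}, so $\psi \in \M$. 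Hence $\M = H^{\infty}(X)$ isometrically and $H_k$ is of Hardy type. For uniqueness, the kernel is unchanged under $j \mapsto e^{i\theta} j$, and any two biholomorphisms $j_1, j_2: X \to \D \setminus E_i$ with $x_0 \mapsto 0$ compose (after extension via Lemma \ref{capacity} in both directions) to a biholomorphism $\D \to \D$ fixing $0$, which is a rotation by the Schwarz lemma; thus the kernel $k$ is uniquely determined by $X$ and $x_0$, and the unitary $U: H^2 \to H_k$, $f \mapsto f \circ j$, is the same as in Theorem \ref{theorem1}.
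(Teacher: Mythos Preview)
Your proof is correct and follows the same overall architecture as the paper's: invoke Theorem~\ref{theorem1} to obtain the injection $j$, upgrade $j$ to a biholomorphism using the pure $1$-dimensionality of $X$, extract $\gamma(E)=0$ via Lemma~\ref{lemma1} and Lemma~\ref{lemma:capacity}(3), and run the converse by extending through $E$ with Lemma~\ref{capacity}. The differences lie in two sub-steps. First, for the biholomorphism, the paper simply cites \cite[Theorem in page~166]{GR}, which says that an injective holomorphic map from a pure $1$-dimensional reduced complex space into $\C$ is open and biholomorphic onto its image; you instead argue by hand, using the normalization to rule out reducible and unibranch singularities separately, and then appeal to elementary Riemann surface theory. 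Your route is more self-contained and makes the geometry explicit; the paper's is a one-line citation. Second, for uniqueness the paper invokes \cite[Corollary~3.2]{Har2017}, which shows more generally that two irreducible complete Nevanlinna--Pick spaces on the same set with the same normalization point and isometrically equal multiplier algebras must coincide; your Schwarz-lemma argument (compose two extended maps $j_2\circ j_1^{-1}$ and $j_1\circ j_2^{-1}$ to get a rotation of $\D$) is tailored to the present situation but entirely elementary. Both approaches are valid; the paper's citations buy brevity and generality, yours buys transparency.
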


We remark that item $(2)$ implies item $(1)$. In fact, if $\D\setminus E$ is not connected, then we have a holomorphic function that takes $1$ on a connected component and $0$ on the others. Then, $\gamma(E)=0$ ensures, by Lemma \ref{lemma:capacity}, that the function admits an analytic extension to the whole $\D$, a contradiction.

\begin{proof}
The uniqueness of $H_k$ follows from \cite[Corollary 3.2]{Har2017}.

Let $H_k$ be an RKHS of Hardy type on $X$ normalized at $x_0\in X$. Theorem \ref{theorem1} shows that there exists an injection $j:X\rightarrow\D$ such that $j\in\M$, $j(x_0)=0$ and
\begin{equation*}
 k(x,y)=\frac{1}{1-j(x)\overline{j(y)}}\;\;\;\;(x,y\in X).
\end{equation*}
Since $H_k$ is of Hardy type, $j$ is holomorphic on $X$. Set $E:=\D\setminus j(X)$.
By Lemma \ref{connected}, $X$ is connected and so is $j(X)$ too. 

We will prove that $X$ is a pure $1$-dimensional reduced complex space.
Since $X$ is connected but not a singleton, we have
$\dim_x{X}\ge 1$
for all $x\in X$. Moreover, the injectivity of the holomorphic function $j$ shows that 
\begin{equation*}
\{\mathfrak{z}\in X\;|\;j(\mathfrak{z})-j(x)=0\}=\{x\}
\end{equation*}
for each $x\in X$. Hence, every $x\in X$ satisfies
$\dim_x{X}\le 1$.
Therefore, $X$ must be pure $1$-dimensional.
Therefore, \cite[Theorem in page 166]{GR} tells us that $j$ is open and a biholomorphic map from $X$ onto $\D\setminus E$. 

It remains to prove $\gamma(E)=0$. For any $\psi\in H^{\infty}(\D\setminus E)$, we define a bounded holomorphic function on $X$ by $\psi\circ j$. Since $H_k$ is of Hardy type, $\psi\circ j$ falls in $\M$. Thus Lemma \ref{lemma1} yields $\Phi\in H^{\infty}(\D)$ such that $\Phi(z)=\psi(z)$ for all $z\in \D\setminus E$. By Lemma \ref{lemma:capacity}(3), we get $\gamma(E)=0$. 

To prove the converse direction, we set 
\begin{equation*}
k(x,y):=\frac{1}{1-j(x)\overline{j(y)}}\;\;\;\;(x,y\in X). 
\end{equation*}
Hence $H_k$ is an irreducible complete Nevanlinna--Pick space of holomorphic functions on $X$ normalized at $x_0$. Since $1=k(\cdot, x_0)\in H_k$ and $\|\phi\|_{\infty}\le\|\phi\|_{\M}$ for all $\phi\in \M$, we have $\M\subset H^{\infty}(X)$. For any $\phi\in H^{\infty}(X)$ with $\|\phi\|_{\infty}=1$ we define a $\psi\in H^{\infty}(\D\setminus E)$ by $\psi:=\phi\circ j^{-1}$. As $\gamma(E)=0$, Lemma \ref{capacity} and Lemma \ref{lemma:capacity}(2) yield an analytic extension $\Psi\in H^{\infty}(\D)$ of $\psi$ with 
\begin{equation*}
\|\Psi\|_{\infty}=\|\psi\|_{\infty}=1.
\end{equation*}
Hence, $\Psi$ is a contractive multiplier of $H^2$ and we can see that $\phi$ is a contractive multiplier of $H_k$. In fact, we have
\begin{equation*}
    (1-\phi(x)\overline{\phi(y)})k(x,y)=\frac{1-\psi(j(x))\overline{\psi(j(y))}}{1-j(x)\overline{j(y)}}=\frac{1-\Psi(j(x))\overline{\Psi(j(y))}}{1-j(x)\overline{j(y)}}.
\end{equation*}
Therefore, $\M$ is isometrically equal to $H^{\infty}(X)$.
\end{proof}

If $d\ge 2$ and $X=\B_d$ or $X=\D^d$, it is known that there is no RKHS of Hardy type on $X$ (see \cite[Proposition 8.83]{AMp} and \cite[Corollary 3.3]{Har2017}). 
As an application of Theorem \ref{theorem2}, we can generalize this fact. 

\begin{cor}\label{corollary1}
If $X$ is either a reduced complex space with a singular point or a complex manifold with $\dim{X}\ge 2$, then there is no RKHS of Hardy type on $X$.
\end{cor}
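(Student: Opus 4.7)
The plan is to argue by contradiction and invoke Theorem \ref{theorem2} directly. Suppose there exists an RKHS $H_k$ of Hardy type on $X$ (with normalization at any chosen basepoint $x_0 \in X$). Then Theorem \ref{theorem2} produces a relatively closed set $E \subset \mathbb{D}$ with $\gamma(E) = 0$ and a biholomorphic map $j : X \to \mathbb{D} \setminus E$. The key observation is that the target $\mathbb{D} \setminus E$, being an open subset of $\mathbb{D}$, is automatically a complex manifold of pure dimension $1$ with no singular points. Since being a smooth point and the local dimension are biholomorphic invariants of a reduced complex space, the same must hold for $X$.

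Concretely, I would carry out the argument in two short steps. First, for the singular case, if $x \in X$ were a singular point, then for any local chart $\varphi_\alpha : X_\alpha \to V_\alpha \subset U_\alpha \subset \mathbb{C}^{n_\alpha}$ around $x$, the germ of $V_\alpha$ at $\varphi_\alpha(x)$ would fail to be biholomorphic to a neighborhood of $j(x)$ in $\mathbb{D} \setminus E$, contradicting that $j$ restricts to a biholomorphism near $x$. Second, for the dimensional case, note that $\dim_y(\mathbb{D} \setminus E) = 1$ for every $y \in \mathbb{D} \setminus E$, and since dimension at a point is preserved under biholomorphism, $\dim_x X = 1$ for every $x \in X$; this directly contradicts $\dim X \geq 2$ in the complex manifold case.

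There is essentially no obstacle here, since Theorem \ref{theorem2} already delivers the full geometric conclusion. The only care required is to invoke the standard fact that biholomorphisms of reduced complex spaces preserve both the set of regular points and the local dimension; this is already implicit in the proof of Theorem \ref{theorem2}, which established that $X$ is pure $1$-dimensional via \cite[Theorem in page 166]{GR}. Thus the corollary is a straightforward specialization of Theorem \ref{theorem2} to two geometric obstructions that prevent $X$ from admitting a global biholomorphism onto a subdomain of $\mathbb{D}$.
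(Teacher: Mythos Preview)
Your proposal is correct and matches the paper's intended approach: the corollary is stated without proof in the paper as an immediate application of Theorem \ref{theorem2}, and your argument simply spells out why the existence of a biholomorphism $j:X\to\mathbb{D}\setminus E$ onto a smooth one-dimensional domain forces $X$ itself to be smooth and pure one-dimensional. The only minor remark is that your first step (singular case) need not be argued via local charts at all, since the conclusion of Theorem \ref{theorem2} that $j$ is biholomorphic onto an open subset of $\mathbb{C}$ already makes every point of $X$ regular; but this is a cosmetic difference, not a gap.
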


\begin{example}
\upshape The origin $(0,0)$ is a singular point of Neil's parabola 
\begin{equation*}
\{(z,w)\in\C^2\;|\; z^2-w^3=0\}.
\end{equation*}
Therefore, there is no RKHS of Hardy type on Neil's parabola.
\end{example}


\section{Characterization of $X$ biholomorphic to the open unit disk $\D$}

We have already seen that there exists an RKHS of Hardy type on the open unit disk minus a singleton.
Therefore, the existence of such a space does not, in general, imply that $X$ is biholomorphic to the open unit disk $\D$.  However, under the additional hypothesis that $X$ is complete with respect to the Carath\'{e}odory distance on $X$, we can prove that $X$ must be biholomorphic to $\D$. 
We define the M$\ddot{\mathrm{o}}$bius pseudo-distance $d_X$ on a reduced complex space $X$ by
\begin{equation*}
d_X(x,y):=\sup_{f\in H(X,\D)}d_{\D}(f(x), f(y))\;\;\;\;(x,y\in X),
\end{equation*}
where $\displaystyle d_{\D}(z,w):=\left|\frac{z-w}{1-\overline{w}z}\right|$ and $H(X,\D)$ is the set of holomorphic functions from $X$ into $\D$. The Carath\'{e}odory pseudo-distance on $X$, denoted by $c_X$, is given by 
\begin{equation*}
c_X(x,y):=\tanh^{-1}(d_X(x,y)).
\end{equation*}

\begin{definition}
We say that a reduced complex space $X$ is {\bf complete with respect to the Carath\'{e}odory distance} if $c_X$ is a distance and $X$ is Cauchy complete with respect to $c_X$. Equivalently, $d_X$ is a distance and $X$ is Cauchy complete with respect to $d_X$
\end{definition}


If $H_k=H^2$, then it is obvious that $d_k=d_{\D}$. The following lemma is a generalization of this observation:
\begin{lemma}\label{lemma4-1}
Let $H_k$ be an irreducible RKHS of holomorphic functions on a reduced complex space $X$. If $H_k$ satisfies the two-point Nevanlinna--Pick property and $\M=H^{\infty}(X)$ isometrically, then 
\begin{equation*}
d_X(x,y)=d_k(x,y)
\end{equation*}
 holds for every $x,y\in X$.
\end{lemma}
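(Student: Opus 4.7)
The plan is to identify both sides of the claimed equality $d_X(x,y)=d_k(x,y)$ with one and the same quantity,
\begin{equation*}
S(x,y)\;:=\;\sup\{|\phi(x)|\;:\;\phi\in H^{\infty}(X),\ \|\phi\|_{\infty}\le 1,\ \phi(y)=0\}.
\end{equation*}

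To access Lemma~\ref{lemma2} with $y$ as the base point, I would first renormalize $k$ at $y$ by means of formula~(\ref{equation:normalization}). By Remark~\ref{remark:invariant} the value $d_k(x,y)$ is unaffected, and the paragraph following~(\ref{equation:normalization}) records that $\M$ together with its norm is intrinsic to the normalization class; hence both the two-point Nevanlinna--Pick property and the isometric identification $\M=H^{\infty}(X)$ persist for $H_{k_y}$. Lemma~\ref{lemma2} then yields
\begin{equation*}
d_k(x,y)\;=\;\sup\{\mathrm{Re}\,\phi(x)\;:\;\phi\in H^{\infty}(X),\ \|\phi\|_{\infty}\le 1,\ \phi(y)=0\},
\end{equation*}
and the rotation $\phi\mapsto e^{-i\arg\phi(x)}\phi$ replaces ``$\mathrm{Re}\,\phi(x)$'' by $|\phi(x)|$ without disturbing the constraints, giving $d_k(x,y)=S(x,y)$.

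On the other hand, I would rewrite $d_X(x,y)$ via a M\"obius change of variables. For $f\in H(X,\D)$, set $g:=(f-f(y))/(1-\overline{f(y)}f)$; then $g\in H(X,\D)$, $g(y)=0$, and $|g(x)|=d_{\D}(f(x),f(y))$. Conversely every $g\in H(X,\D)$ with $g(y)=0$ arises this way (take $f:=g$), so $d_X(x,y)=\sup\{|g(x)|:g\in H(X,\D),\ g(y)=0\}$. The inclusion $\{g\in H(X,\D):g(y)=0\}\subset\{\phi\in H^{\infty}(X):\|\phi\|_{\infty}\le 1,\ \phi(y)=0\}$ gives $d_X(x,y)\le S(x,y)$, and the reverse inequality follows by scaling: for any $\phi$ in the latter set, the rescaled $r\phi$ with $r\in(0,1)$ lies in $H(X,\D)$, vanishes at $y$, and satisfies $|r\phi(x)|\to|\phi(x)|$ as $r\uparrow 1$, so $|\phi(x)|\le d_X(x,y)$, i.e.\ $S(x,y)\le d_X(x,y)$. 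Combining the two identifications gives $d_X(x,y)=d_k(x,y)$.

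The only point requiring genuine care is the transfer from the RKHS-side extremal problem (supremum over the $\M$-unit ball) to the purely complex-analytic one (supremum over the $H^{\infty}$-unit ball); this is exactly what the isometric identification $\M=H^{\infty}(X)$ supplies, and everything else reduces to the extremal formula of Lemma~\ref{lemma2} together with standard Schwarz--Pick bookkeeping.
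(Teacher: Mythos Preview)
Your argument is correct and rests on the same pivot as the paper---Lemma~\ref{lemma2} applied after renormalizing at the second point---but your execution is more self-contained. The paper obtains $d_X\le d_k$ by quoting \cite[Theorem~6.22]{ARSW2019}, and then gets $d_k\le d_X$ by invoking the \emph{existence} of the extremal multiplier $\phi_x$ from Lemma~\ref{lemma2} and using the maximum modulus principle for multipliers \cite[Lemma~2.2]{AHMR} to force $\phi_x\in H(X,\D)$. You bypass both external ingredients by introducing the auxiliary quantity $S(x,y)$: the rotation trick identifies the $\mathrm{Re}$-supremum with the $|\cdot|$-supremum, and the M\"obius reduction together with the $r\uparrow 1$ scaling handles the passage between the open-disk constraint $H(X,\D)$ and the closed-ball constraint $\|\phi\|_\infty\le 1$. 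The upshot is that your proof needs neither the attainment clause of Lemma~\ref{lemma2} nor the maximum modulus principle, while the paper's route has the virtue of exhibiting the extremal function explicitly (which is useful elsewhere, e.g.\ in the proof of Theorem~\ref{theorem1}).
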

\begin{proof}
Since $\M$ is isometrically equal to $H^{\infty}(X)$, \cite[Theorem 6.22]{ARSW2019} implies that $d_X(x,y)\le d_k(x,y)$ for all $x,y\in X$. For an arbitrary point $x_0\in X$ we set a normalized reproducing kernel $k_{x_0}$ by equation (\ref{equation:normalization}). Then, it is easy to check that 
\begin{equation*}
d_k(x,x_0)=d_{k_0}(x,x_0)
\end{equation*} 
holds; see Remark \ref{remark:invariant}.
By Lemma \ref{lemma2}, we have 
\begin{equation*}
d_k(x,x_0)=\sup\{\mathrm{Re}\phi(x)\;|\;\|\phi\|_{\M}\le 1 \;\mathrm{and}\; \phi(x_0)=0\}
\end{equation*}
for all $x\in X$. 
Moreover, there is a multiplier $\phi_x$ that achieves the supremum if $x\ne x_0$. 
The maximum modulus principle for multipliers \cite[Lemma 2.2]{AHMR} shows $\phi_x\in H(X,\D)$. Moreover, we have
\begin{equation*}
d_k(x,x_0)=\mathrm{Re}\phi_x(x)\le|\phi_x(x)|=d_{\D}(\phi_x(x), \phi_x(x_0))\le d_X(x,x_0).
\end{equation*}
Since $x$ and $x_0$ are arbitrary, we are done.
\end{proof}

If $H_k$ is an RKHS of Hardy type on a reduced complex space $X$,  then the above lemma and \cite[Lemma 9.9]{AMp} imply that $d_X$ is a distance. Hence, so is $c_X$.


Here is the main result of this section.

\begin{theorem}\label{prop1}
    Let $H_k$ be an RKHS of Hardy type on a reduced complex space $X$.
    If $X$ is complete with respect to the Carath\'{e}odory distance, then $X$ must be biholomorphic to $\D$.
\end{theorem}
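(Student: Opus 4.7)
The plan is to reduce, via Theorem \ref{theorem2}, to the following statement: if $E\subset\D$ is relatively closed with $\gamma(E)=0$ and $\D\setminus E$ is connected and Carath\'eodory-complete, then $E=\emptyset$. Fix the biholomorphism $j:X\to\D\setminus E$ supplied by Theorem \ref{theorem2}. Because the M\"obius pseudo-distance is a biholomorphic invariant, $(X,d_X)$ is isometric to $(\D\setminus E, d_{\D\setminus E})$, so the latter is Cauchy complete as well.

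The key step is to identify $d_{\D\setminus E}$ with the restriction of $d_\D$ to $\D\setminus E$. The inclusion $\iota:\D\setminus E\hookrightarrow\D$ lies in $H(\D\setminus E,\D)$, so $d_{\D\setminus E}(z,w)\ge d_\D(z,w)$. For the reverse inequality, take any $f\in H(\D\setminus E,\D)$. Since $\|f\|_\infty\le 1$ and $\gamma(E)=0$, Lemma \ref{capacity} together with Lemma \ref{lemma:capacity}(2) gives a norm-preserving extension $F\in H^\infty(\D)$ with $\|F\|_\infty\le 1$. If $|F|$ attains the value $1$ at some point of $\D$, then the maximum modulus principle forces $F$ to be a unimodular constant, whence $d_\D(f(z),f(w))=0$; otherwise $F\in H(\D,\D)$ and the Schwarz--Pick lemma yields $d_\D(f(z),f(w))=d_\D(F(z),F(w))\le d_\D(z,w)$. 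Taking the supremum over $f$ gives $d_{\D\setminus E}\le d_\D|_{\D\setminus E}$, so the two pseudo-distances coincide.

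To conclude, I invoke the classical fact that the Poincar\'e metric $c_\D=\tanh^{-1}(d_\D)$ is complete on $\D$, which (since $\tanh^{-1}:[0,1)\to[0,\infty)$ is a homeomorphism, so the two metrics share the same Cauchy sequences) is equivalent to completeness of $(\D,d_\D)$. A subspace of a complete metric space is itself complete if and only if it is closed, so $\D\setminus E$ is closed in $(\D,d_\D)$, i.e., closed in the Euclidean topology on $\D$. Since $E$ is relatively closed in $\D$, $\D\setminus E$ is also open, hence clopen in the connected set $\D$. As $\D\setminus E$ is non-empty (it contains $j(x_0)=0$), it must equal $\D$, forcing $E=\emptyset$ and exhibiting $j$ as a biholomorphism of $X$ onto $\D$.

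The main obstacle is precisely the identification $d_{\D\setminus E}=d_\D|_{\D\setminus E}$, which is where the Hardy-type hypothesis enters in an essential way through $\gamma(E)=0$. Without such removability, new holomorphic maps $\D\setminus E\to\D$ could arise that do not extend across $E$ and could separate points of $\D\setminus E$ more strongly than maps in $H(\D,\D)$ do; in that case Carath\'eodory completeness of $X$ alone would no longer suffice to force $E$ to be empty, as the example of $\D\setminus\{0\}$ equipped with its (non-complete) intrinsic Carath\'eodory distance is already transparent.
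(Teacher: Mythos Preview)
Your argument is correct and follows the same overall architecture as the paper: apply Theorem~\ref{theorem2} to obtain a biholomorphism $j:X\to\D\setminus E$ with $\gamma(E)=0$, show that the Carath\'eodory distance on $X$ corresponds under $j$ to the restriction of $d_\D$, and then use completeness to force $E=\emptyset$.

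The one genuine difference lies in how the middle identification is made. The paper invokes Lemma~\ref{lemma4-1} (which rests on the two-point Pick property and the extremal characterization of $d_k$) to obtain $d_X=d_k$, and then reads off $d_k(x,y)=d_\D(j(x),j(y))$ from the explicit kernel formula; the conclusion is then reached by a direct Cauchy-sequence argument using the density of $\D\setminus E$ in $\D$. You instead transfer to $\D\setminus E$ by biholomorphic invariance of $d_X$ and prove $d_{\D\setminus E}=d_\D|_{\D\setminus E}$ from scratch via removability plus Schwarz--Pick, finishing with the ``complete subspace is closed'' argument. Your route is slightly more self-contained: once Theorem~\ref{theorem2} has been used, no further RKHS machinery (neither the kernel formula nor Lemma~\ref{lemma4-1}) is needed, only classical one-variable facts. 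The paper's route, on the other hand, exhibits the identity $d_X=d_k$ as an instance of a general principle valid for any space satisfying the hypotheses of Lemma~\ref{lemma4-1}, which is of independent interest. Your final closure/clopen argument and the paper's sequential argument are of course equivalent reformulations of the same completeness step.
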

\begin{proof}
By theorem \ref{theorem2}, there exist a relatively closed set $E\subset\D$ and a biholomorphic map $j$ from $X$ onto $\D\setminus E$ such that $\gamma(E)=0$ and 
\begin{equation*}
    k(x,y)=\frac{1}{1-j(x)\overline{j(y)}}\;\;\;\;(x,y\in X).
\end{equation*}
We will prove that $E=\emptyset$. 

For any $z\in \D$ we can choose a sequence $\{x_n\}_{n=1}^{\infty}$ in $X$ such that $\displaystyle\lim_{n\to\infty}j(x_n)=z$ (see Lemma \ref{lemma:capacity}(1)). Note that Lemma \ref{lemma4-1} immediately implies that
\begin{equation*}
d_X(x,y)=\sqrt{1-\frac{(1-|j(x)|^2)(1-|j(y)|^2)}{|1-\overline{j(y)}j(x)|^2}}=d_{\D}(j(x), j(y)).
\end{equation*}
Therefore, we have $\displaystyle \lim_{n, m\to \infty}d_X(x_n, x_m)=\lim_{n,m\to\infty}d_{\D}(j(x_n), j(x_m))=0$. Since $X$ is complete with respect to the Carath\'{e}odory distance, there exists a point $x\in X$ such that $\displaystyle \lim_{n\to\infty}d_X(x_n, x)=0$. Hence, we have $\displaystyle z=\lim_{n\to\infty}j(x_n)=j(x)$ and $E=\emptyset$.
\end{proof}


\begin{cor}\label{corollary2}
    Let $X$ be a reduced complex space. If $X$ is complete with respect to the Carath\'{e}odory distance but not biholomorphic to $\D$, then there is no RKHS of Hardy type on $X$.
\end{cor}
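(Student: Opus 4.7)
The plan is to leverage Theorem \ref{theorem2} to reduce the problem to showing that the exceptional set $E$ in the biholomorphism $j : X \to \D \setminus E$ must be empty, and then use the Carath\'{e}odory completeness to fill in every missing point of $E$.

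First I would apply Theorem \ref{theorem2} to produce a relatively closed $E \subset \D$ with $\gamma(E) = 0$, a biholomorphism $j : X \to \D \setminus E$ with $j(x_0) = 0$, and the explicit reproducing kernel
\begin{equation*}
k(x,y) = \frac{1}{1 - j(x)\overline{j(y)}}.
\end{equation*}
It suffices to show $E = \emptyset$, since then $j$ directly witnesses $X \cong \D$. Next I would compute $d_X$ in closed form. Since $H_k$ is of Hardy type (so in particular satisfies the two-point Pick property and $\M = H^{\infty}(X)$ isometrically), Lemma \ref{lemma4-1} gives $d_X = d_k$, and plugging in the kernel above yields $d_X(x,y) = d_{\D}(j(x), j(y))$, so $j$ is an isometry from $(X, d_X)$ onto $(\D \setminus E, d_{\D})$. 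This is the bridge that transfers Carath\'{e}odory completeness on $X$ into a closedness property of $\D \setminus E$ inside $\D$.

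Now fix an arbitrary $z \in \D$; the goal is to exhibit $x \in X$ with $j(x) = z$. Because $\gamma(E) = 0$, Lemma \ref{lemma:capacity}(1) tells us that $\D \setminus E$ is dense in $\D$, so there is a sequence $(x_n) \subset X$ with $j(x_n) \to z$ in the Euclidean sense. Then $(j(x_n))$ is $d_{\D}$-Cauchy, so via the isometry $(x_n)$ is $d_X$-Cauchy, and by the completeness hypothesis there exists $x \in X$ with $d_X(x_n, x) \to 0$.

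The main (and essentially the only) delicate step will be concluding $j(x) = z$, which requires matching the metric topology on $X$ with its complex space topology. Since $j$ is a biholomorphism, it is in particular a homeomorphism, so the complex space topology on $X$ is the pullback along $j$ of the Euclidean topology on $\D \setminus E$; on the other hand the $d_X$-metric topology is, via the same isometry, the pullback of the $d_{\D}$-topology on $\D \setminus E$, which is again the Euclidean one. Hence the two topologies coincide, $d_X$-convergence $x_n \to x$ forces $j(x_n) \to j(x)$ in the usual sense, and so $j(x) = z$. Since $z \in \D$ was arbitrary, $E = \emptyset$ and $j : X \to \D$ is the desired biholomorphism.
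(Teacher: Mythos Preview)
Your proposal is correct and follows essentially the same route as the paper. In the paper the corollary is stated without proof because it is the contrapositive of Theorem~\ref{prop1}, and your argument reproduces the paper's proof of that theorem almost line for line: apply Theorem~\ref{theorem2} to obtain $j:X\to\D\setminus E$ with $\gamma(E)=0$, use Lemma~\ref{lemma4-1} to identify $d_X(x,y)=d_{\D}(j(x),j(y))$, invoke density of $\D\setminus E$ (Lemma~\ref{lemma:capacity}(1)) to approximate any $z\in\D$, and then use Carath\'eodory completeness to produce $x$ with $j(x)=z$. Your final paragraph on matching topologies is a correct elaboration, though the paper handles that step more directly: from $d_X(x_n,x)\to 0$ one immediately has $d_{\D}(j(x_n),j(x))\to 0$, hence $j(x)=\lim j(x_n)=z$, with no separate topological argument required.
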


\begin{example}\label{example:annulus}
\upshape For $r\in\mathbb{R}$ with $0<r<1$, the annulus 
\begin{equation*}
A(r):=\{z\in \C\;|\;r<|z|<1\}
\end{equation*}
is complete with respect to the Carath\'{e}odory distance but not biholomorphic to the open unit disk $\D$. Therefore, there is no RKHS of Hardy type on $A(r)$.
\end{example}

\begin{remark}\label{remark:last}
\upshape (1) Arcozzi, Rochberg and Sawyer \cite[Corollary 13]{ARS} proved that, if $X$ is a domain in $\C$ with boundary consisting of finitely many smooth curves, then there exists an irreducible complete Nevanlinna--Pick space $H_k$ whose multiplier and supremum norms are equivalent.
Therefore, there exists such a space on the annulus $A(r)$ in contrast to Example \ref{example:annulus}.

(2) Let $X$ be a bounded domain in $\C$. Selby \cite{Sel} gave some necessary and sufficient conditions for $X$ to be complete with respect to the Carath\'{e}odory distance. Since we are now considering an RKHS of Hardy type on $X$, the conditions in \cite{Sel} are equivalent to the conditions (i')-(iii') in \cite[Proposition 4.2]{AHMR}. In fact, these results share the condition that the union of all fibers corresponding to points of $X$ forms a Gleason part in the maximal ideal space of $H^{\infty}(X)=\M$. 
\end{remark}


\section*{Acknowledgment}
The author gratefully acknowledges Professor John Edward McCarthy, who was the author's host in JSPS Overseas Challenge Program for Young Researchers, for fruitful discussions. 
The author also acknowledges Dr.\,George Tsikalas for his useful comments, and for letting the author know the work \cite{ARS} (concerning Remark \ref{remark:last} (1)).
The author is grateful to his supervisor Professor Yoshimichi Ueda for his comments on the draft of this paper. This work was supported by JSPS Research Fellowship for Young Scientists (KAKENHI Grant Number JP 23KJ1070).


\end{document}